\documentclass[11pt,a4paper,reqno]{amsart}

\usepackage{amssymb,mathtools}
\usepackage{enumerate}
\usepackage{hyperref}

\newtheorem{theorem}{Theorem}[section]

\newtheorem{lemma}[theorem]{Lemma}
\newtheorem{corollary}[theorem]{Corollary}
\theoremstyle{definition}

\newtheorem{remark}[theorem]{Remark}

\newtheorem{notation}[theorem]{Notation}
\newtheorem{question}[theorem]{Question}

\newcommand{\Sym}{\mathrm{Sym}}
\newcommand{\bbF}{\mathbb{F}}
\newcommand{\Cen}{\mathbf{C}}
\newcommand{\Nor}{\mathbf{N}}
\newcommand\GL{\mathrm{GL}}

\allowdisplaybreaks

\title{Asymptotic enumeration of minimally transitive permutation groups}

\author[B.~Xia]{Binzhou Xia}
\address[Binzhou Xia]{School of Mathematics and Statistics\\The University of Melbourne\\Parkville, VIC 3010\\Australia}
\email{binzhoux@unimelb.edu.au}

\author[S.~Zheng]{Shasha Zheng}
\address[Shasha Zheng]{Alfr\'ed R\'enyi Institute of Mathematics\\Budapest\\Re\'altanoda u.~13-15, H-1053\\Hungary}
\email{shashazheng@renyi.hu}

\date{}

\begin{document}

\begin{abstract}
We prove that Pyber's upper bound $2^{O(n\log(n))}$ for the number of minimally transitive subgroups of $S_n$ is best possible along the powers of every fixed prime, even when the groups are counted up to permutational isomorphism. As a byproduct, our construction shows that, along the powers of every fixed prime, the maximum order of a minimally transitive permutation group of degree $n$ is $2^{\Theta(n)}$. For completeness, we also present Pyber's previously unpublished proof of his upper bound. We further deduce that the numbers of labelled vertex-transitive graphs and digraphs of order $n$ are both $2^{\Theta(n\log(n))}$, and discuss the implications of our results for approaches to the McKay--Praeger conjecture.

\emph{Key words:} minimally transitive permutation groups, asymptotic enumeration.

\emph{MSC2020:} 20B05, 05A16, 20B35.
\end{abstract}

\maketitle

\section{Introduction}

Throughout this paper, $\log$ denotes the logarithm to base $2$, and we use the standard asymptotic notation $O$, $\Omega$, and $\Theta$ (see Section~\ref{Sec:Pre}).

A transitive permutation group is said to be \emph{minimally transitive} if none of its proper subgroups is transitive. Every finite transitive permutation group contains a minimally transitive subgroup, and hence minimally transitive groups arise naturally as reduction objects in various applications~\cite{BS1985,NV1977,SW1963,Tracey2018}.

Several structural and generation questions concerning minimally transitive permutation groups have been studied extensively. For structural results, see~\cite{DVS2007} and the references therein, as well as~\cite[Section~3]{Tracey2018}. Concerning generation, Neumann and Vaughan-Lee~\cite[Lemma~3.1]{NV1977} proved that every minimally transitive permutation group of degree $n$ can be generated by at most $\log(n)$ elements. Refined versions of this result depending on the prime factorization $n=p_1^{m_1}\cdots p_s^{m_s}$ have been studied by Shepperd and Wiegold~\cite{SW1963}, Lucchini~\cite{Lucchini1998}, and Tracey~\cite{Tracey2016}. In particular, Tracey proved that every minimally transitive permutation group of degree $n$ can be generated by at most $\max\{m_1,\ldots,m_s\}+1$ elements.

In this paper, we are concerned with the enumeration of minimally transitive permutation groups, a problem motivated particularly by applications to the enumeration of vertex-transitive combinatorial structures.
For example, in attempting to improve their upper bound on the number of labelled vertex-transitive graphs, Babai and S\'{o}s~\cite{BS1985} observed that it was a major unsolved problem in group theory to estimate the number of minimally transitive permutation groups of a given degree $n$. The first upper bound in this direction, given in~\cite[Corollary~6.5]{BS1985}, is of the form $2^{(1+o(1))n(\log(n))^2}$. Pyber subsequently improved this to the following upper bound.

\begin{theorem}[Pyber]\label{Thm:UpperBound}
As the integer $n\to\infty$, the number of minimally transitive subgroups of $S_n$ is $2^{O(n\log(n))}$.
\end{theorem}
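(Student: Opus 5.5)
The plan is to separate an order bound from an encoding argument, and to organise both around a maximal block system. Let $G\le S_n$ be minimally transitive. I will first establish (or quote, if it is treated earlier or later in the paper) an exponential order bound $|G|\le c^{\,n}$ for an absolute constant $c$. I would prove this by induction along a chain of block systems
\[
\{\{\alpha\}\}=\mathcal{B}_0\prec\mathcal{B}_1\prec\cdots\prec\mathcal{B}_r=\{\Omega\},
\]
each maximal in the next, so that the group induced on the blocks of $\mathcal{B}_{i-1}$ inside a block of $\mathcal{B}_i$ is primitive. Minimal transitivity is inherited in a weak form: if $T\le G^{\Sigma}$ is transitive on the blocks, then its full preimage in $G$ is transitive, hence equals $G$. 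Consequently $G^{\Sigma}$ is minimally transitive on $\Sigma$. For the kernel I would use the Neumann--Vaughan-Lee/Tracey generation bounds. The point is that the kernel is generated as a normal subgroup by few elements, so its composition length in each primitive layer is linear in the number of blocks.

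For the count, I would encode each $G$ by data of total bit length $O(n\log n)$. The data are as follows.
\begin{enumerate}[(i)]
\item The chain of block systems: at most $n!\cdot 2^{O(n)}$, that is $2^{O(n\log n)}$, choices.
\item The minimally transitive group $G^{\Sigma}$ on the top layer of $k$ blocks, which is handled by induction on the degree.
\item The action inside blocks.
\end{enumerate}
For (iii), fix a block $B$ with setwise stabiliser $G_{\{B\}}$. By transitivity, $G$ is generated by $G_{\{B\}}$ together with lifts $x_1,\dots,x_d$ of a generating set of $G^{\Sigma}$, where $d\le\log k$ by Neumann--Vaughan-Lee. Minimality then lets one replace $G_{\{B\}}$ by a subgroup $L$ that is minimal subject to $L^{B}$ being transitive. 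Encoding the $x_j$ costs $d\cdot\log(n!)=O(n\log n\cdot\log k)$, which is too much if done naively. So instead I encode each $x_j$ by its action on block labels, which is already given, together with $k$ bijections between blocks. Composing appropriately, this costs only $O(k\cdot b\log b)$ plus an $O(\log |G|)=O(n)$ correction per generator. This uses the order bound: once $\mathcal{B}$ and $G^{\Sigma}$ are fixed, an element of $G$ is determined up to the kernel, and the kernel has size at most $c^{\,n}$.

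The resulting recursion has the shape
\[
f(n)\le 2^{O(n\log n)}\, f(k)\, f(b)^{O(1)}\, 2^{O(n\log k)}, \qquad n=kb,
\]
where $f(m)$ denotes the number of minimally transitive subgroups of $S_m$. The exponent of $f(b)$ must be $O(1)$ rather than $k$. This is where minimal transitivity is really used: all blocks are conjugate under $G$, so it suffices to describe $L^{B}$ on one block and then transport it by the chosen lifts. Unwinding the recursion gives $\log f(n)=O(n\log n)$, because $\sum_i n\log(k_i)=n\log n$ along the chain.

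I expect the main obstacle to be controlling the kernel of the action on blocks. The lifts $x_j$ are only determined modulo this kernel, and if the kernel were large and arbitrary its subgroup structure would blow up the count. I would first try to show that, for minimally transitive $G$, the kernel is generated as a $G$-normal subgroup by $O(\log n)$ elements of $L$. Each of these would cost $O(n)$ bits to specify given the order bound, so the whole kernel would be encoded within the budget.
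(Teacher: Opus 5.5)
Your proposal has genuine gaps, and the decisive one is in the encoding. A minimally transitive group may need about $\log n$ generators, and naming an arbitrary permutation costs $\log(n!)\sim n\log n$ bits. That is exactly the Babai--S\'os barrier $2^{O(n(\log n)^2)}$.

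Your recursion gets under this barrier only by assuming that each lift $x_j$, and each normal generator of the block kernel, costs $O(n)$ bits ``given the order bound''. This is circular. Knowing $|G|\le c^n$ does not tell you \emph{which} permutations make up $G$ or its kernel, so a kernel element must still be named inside the ambient $S_b^k$, at a cost of about $n\log b$ bits. Without that shortcut your own accounting gives $k$ block bijections per lift, i.e. about $n\log b$ bits per lift. With up to $\log k$ lifts this totals $n\log b\log k$, which is $\Theta(n(\log n)^2)$ when $b\approx k\approx\sqrt n$.

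What you need is an \emph{a priori} container: a subgroup of order $2^{O(n)}$, chosen from only $2^{O(n\log n)}$ candidates, that contains all but $O(1)$ of the generators. The paper obtains one as follows.
\begin{enumerate}[(1)]
\item By Aschbacher--Guralnick, $G=\langle H,g\rangle$ with $H$ solvable.
\item Minimal transitivity lets you shrink $H$ to an orbit-minimal solvable subgroup.
\item By Lov\'asz's theorem, $H$ is generated by at most $\log n$ elements.
\item $H$ lies in one of at most $n!\,2^{17n}$ maximal solvable subgroups of $S_n$, each of order at most $24^{(n-1)/3}$ by Dixon.
\end{enumerate}
Hence $H$ costs $O(n\log n)$ bits and $g$ costs $\log(n!)$. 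No order bound on $G$ is needed at all.

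Several auxiliary steps are also wrong or unproved.
\begin{enumerate}[(a)]
\item $G^\Sigma$ need not be minimally transitive for an arbitrary block system. The preimage of a block-transitive $T\le G^\Sigma$ need not be transitive inside a block. For example, $S_3$ acting regularly on $6$ points is minimally transitive. Take $\Sigma$ to be the three right cosets of a subgroup of order $2$; this is a maximal block system with primitive layers. Then $G^\Sigma$ is $S_3$ in its natural action, which contains the transitive $C_3$, but the preimage $C_3$ has two orbits on the $6$ points. Your argument is valid only when $\Sigma$ is the set of orbits of a normal subgroup, so step (ii) fails as you set it up.
\item The bound $|G|\le c^n$ for all minimally transitive groups is not established in the paper; it is obtained there only for $p$-power degree, via Sylow subgroups. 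Your sketch does not prove it. Few normal generators do not help: in $A_b\wr C_k$ the base group $A_b^k$ is the normal closure of a single element. Linear composition length does not help either: $A_b^k$ has composition length $k$ but order $2^{\Theta(kb\log b)}$. You would have to show that minimal transitivity excludes such layers, and nothing in the proposal addresses this.
\item $L$ is not determined by $L^B$, because $L$ acts on every block. The kernel of $L\to L^B$ is the same uncontrolled kernel again, so the factor $f(b)^{O(1)}$ is unjustified.
\end{enumerate}
You correctly identified the kernel as the main obstacle, but the proposal contains no mechanism for overcoming it.
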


This result was stated in~\cite[Theorem~4.4]{Pyber1991}, where it was claimed more precisely that the number of minimally transitive subgroups of $S_n$ is at most $(n!)^{4+o(1)}$. Note that $(n!)^{4+o(1)}=2^{\Theta(n\log(n))}$ by Stirling's formula. Pyber communicated to us the proof presented in Section~\ref{Sec:UpperBound}, and a more precise version that follows from this proof is stated as Theorem~\ref{Thm:UpperBoundDetail}.

How close is Pyber's upper bound to being sharp?
The problem of looking for a sharp upper bound for the number of conjugacy classes of minimally transitive groups was proposed in~\cite{Pyber1991}, and it was commented that an exponential upper bound would be quite sharp and also give the order of magnitude of the number of vertex-transitive graphs. Similar observations were made by Morris and Spiga~\cite[Subsection~8.4]{MS2021} in connection with the asymptotic enumeration of vertex-transitive digraphs.

Our main result shows that Pyber's upper bound $2^{O(n\log(n))}$ is best possible along the powers of every fixed prime, even when the minimally transitive groups are counted only up to permutational isomorphism. In particular, an upper bound of the form $2^{O(n)}$ cannot hold. Here two subgroups of $S_n$ are said to be \emph{permutationally isomorphic} if they are conjugate in $S_n$, or equivalently, if their natural permutation representations differ only by a relabelling of the underlying set.

\begin{theorem}\label{Thm:LowerBound}
For each fixed prime number $p$, as $n$ increases as a power of $p$, the number of permutational isomorphism classes of minimally transitive permutation groups of degree $n$ is $2^{\Omega(n\log(n))}$.
\end{theorem}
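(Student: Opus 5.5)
The plan is to build, inside a single suitable $p$-group, many core-free subgroups of index $n$ that lie in the Frattini subgroup. The basic reduction is standard. Let $G$ be a finite $p$-group acting transitively with point stabilizer $H$. A subgroup $K\le G$ is transitive if and only if $HK=G$. Every proper subgroup lies in a maximal subgroup $M$, and $HM=G$ holds exactly when $H\not\le M$. Hence $G$ is minimally transitive if and only if $H\le\Phi(G)$. So it suffices to produce $2^{\Omega(n\log n)}$ pairwise non-equivalent pairs $(G,H)$ with $H\le\Phi(G)$ core-free and $|G:H|=n=p^m$. Two such pairs give permutationally isomorphic actions only if some isomorphism of the groups carries one stabilizer to the other. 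Therefore, within a fixed $G$, we may count subgroups $H$ modulo $\mathrm{Aut}(G)$.

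For the construction, take a $p$-group $T$ of order $N$ and an $r$-generated free group $F$ with $F/R\cong T$. Put $G=F/R'R^p$ and $V=R/R'R^p$, an elementary abelian normal subgroup.
\begin{itemize}
\item $V\le\Phi(G)$ as long as $T$ itself needs $r$ generators.
\item By Schreier and Gasch\"utz, $\dim V=(r-1)N+1$ and $V\cong\bbF_p\oplus(\bbF_pT)^{r-1}$ as a $T$-module.
\end{itemize}
We choose subspaces $H\le V$ of codimension $c$ that contain no nonzero $T$-submodule; these are exactly the core-free ones. Each such $H$ gives a minimally transitive group of degree $Np^c$, so we set $N=n/p^c$. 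The submodules form a small lattice: each summand $\bbF_pT$ has a unique minimal submodule, so there are few minimal submodules overall. A union bound then shows that a positive proportion of the $\approx p^{c(\dim V-c)}$ codimension-$c$ subspaces are core-free. With $\dim V\approx rN$, this count is about $p^{crN}$.

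The main obstacle is dividing out by $\mathrm{Aut}(G)$, and the parameters must be balanced against it. We need $crN=\Theta(n\log n)$, i.e.\ $cr/p^c=\Theta(m)$. The crude estimate $|\mathrm{Aut}(G)|\le|G|^r\approx p^{r^2N}$ is too weak for this, since it forces $r\lesssim c$. It is then impossible to have both $N\approx n/p^c$ large and $crN\gtrsim n\log n$.

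I would therefore try to bound the image of $\mathrm{Aut}(G)$ in its action on $V$ more cleverly, as follows.
\begin{enumerate}
\item Split $\mathrm{Aut}(G)$ into the part acting on $G/V\cong T$, of size at most $|T|^r$, and the kernel of this action.
\item Show that a kernel element $g\mapsto g\,v_g$ acts on $V$ through a map determined by a cocycle $T\to V$. One then hopes that it moves subspaces $H$ only within a family of size $p^{O(rN)}$, which does not depend on $c$.
\end{enumerate}
If this works, the number of classes is at least $p^{crN-O(rN)}$. Choosing $r\approx p^c$ and $c=\lfloor\varepsilon m\rfloor$ then yields $p^{\Omega(cn)}=2^{\Omega(n\log n)}$.

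If the cocycle analysis proves awkward, I would fall back on a labelled count. The number of distinct transitive subgroups of $S_n$ that are minimally transitive is at least the number of admissible $H$ (up to the small $T$-part of $\mathrm{Aut}(G)$) times $n!/|\Nor_{S_n}(G)|$. Dividing by $n!$ to pass to conjugacy classes then requires the $H$-count to exceed $n!$ by a factor $2^{\Omega(n\log n)}$. This again reduces to choosing $c$ and $r$ so that $crN\ge(1+\delta)n\log_p n$.
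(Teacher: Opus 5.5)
The real obstacle is not $\mathrm{Aut}(G)$: the family of pairs $(G,H)$ you set up is already too small. Since $T$ is a $p$-group, every nonzero $T$-submodule of $V$ contains a nonzero $T$-fixed vector. So $H$ is core-free exactly when $H\cap V^T=0$. But $V^T=\mathrm{soc}(V)$ has dimension $r$. This follows from your decomposition (one line per summand). It also holds for the actual module: with $r=d(T)$, $V$ is the kernel of the projective cover $(\bbF_pT)^r\to I$ of the augmentation ideal $I$, so it contains the $r$-dimensional socle of $(\bbF_pT)^r$. (Incidentally, for noncyclic $T$ this kernel is the indecomposable module $\Omega^2(\bbF_p)$. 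So the Gasch\"{u}tz decomposition you quote is wrong, although the dimension $(r-1)N+1$ is right.)

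A codimension-$c$ subspace meets an $r$-dimensional subspace nontrivially when $r>c$, so core-free $H$ exist only if $r\le c$. For the same reason there are $(p^r-1)/(p-1)$ minimal submodules rather than "few", and your union bound only works when $r$ is at most about $c$. Hence the number of admissible $H$ is
$O(p^{c\dim V})\le O(p^{c^2N})=O(p^{c^2n/p^c})=2^{O(n)}$.
There are only $2^{o(n)}$ choices of $c$, $T$ and $F\to T$. So neither a finer analysis of $\mathrm{Aut}(G)$ nor your labelled fallback can reach $2^{\Omega(n\log n)}$. Your parameters $r\approx p^c$, $c=\lfloor\varepsilon m\rfloor$ violate $r\le c$. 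Since $p^{\varepsilon m}\gg m$, they also violate $r=d(T)\le\log_p N=m-c$.

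The missing idea is to drop core-freeness and pass to faithful quotients, taking $c=1$, $T=\bbF_p^m$ and $r=m$. For a hyperplane $H$ of $V$, the group $G/\mathrm{core}_G(H)$ is still minimally transitive of degree $p^{m+1}$, because $H/\mathrm{core}_G(H)\le\Phi(G)/\mathrm{core}_G(H)\le\Phi(G/\mathrm{core}_G(H))$. Up to conjugacy these are exactly the paper's transitive groups $G(b)=\langle b_1t_1,\dots,b_mt_m\rangle\le\bbF_p\wr T$. In this regime $V=\Phi(G)$, so counting $\mathrm{Aut}(G)$-orbits of hyperplanes is still legitimate. However, there are only about $p^{(m-1)p^m}$ hyperplanes, so an orbit bound of the form $p^{O(rN)}=p^{O(mp^m)}$ is useless unless its implied constant is below $1$.

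The real content, which your proposal does not supply, is showing that $p^{(1-o(1))mp^m}$ of these groups are pairwise non-conjugate. The paper separates them by $\Phi(G(b))=K(b)$. It identifies $K(b)$ with the ideal of $R=\bbF_p[X_1,\dots,X_m]/(X_1^p,\dots,X_m^p)$ generated by the coordinates of a linear map $\Psi(b)$, and proves $|\mathrm{Ker}(\Psi)|=p^{p^m-1}$. It then restricts the $b_i$ to a middle-degree window of width $O(m^{2/3})$. There an ideal determines $\Psi(b)$ up to $p^{o(p^m)}$ choices, so it arises from at most $p^{p^m+o(p^m)}$ tuples $b$. Finally, a permutation conjugating $G(b)$ to $G(c)$ maps $K(b)$ to $K(c)$. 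It therefore preserves the block system and induces an element of $\Nor_{\Sym(\bbF_p^m)}(T)$ on it. Such permutations move $K(b)$ within at most $p^{p^m+m^2}$ possibilities.
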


A more precise version of the above theorem that we will prove is stated as Theorem~\ref{Thm:LowerBoundDetail}.

\begin{remark}
Theorems~\ref{Thm:UpperBound} and~\ref{Thm:LowerBound} together determine the correct order of magnitude of the exponent along the powers of any fixed prime: this exponent is $\Theta(n\log(n))$ no matter we count the groups up to permutational isomorphism or not. Using product actions, the construction underlying Theorem~\ref{Thm:LowerBound} also produces many minimally transitive groups in further degrees. However, such a matching lower bound up to permutational isomorphism cannot hold in every degree. Indeed, if $n=p$ is prime, then every transitive subgroup of $S_p$ contains a regular cyclic subgroup of order $p$. In this case, the minimal transitivity forces the group itself to be cyclic and regular, and hence there is only one permutational isomorphism class of minimally transitive groups of degree $p$.
\end{remark}

\begin{remark}
Every regular permutation group is minimally transitive, and the permutational isomorphism classes of regular groups of degree $n$ are in bijection with the isomorphism classes of groups of order $n$. Nevertheless, regular groups alone are far too sparse to account for Theorem~\ref{Thm:LowerBound}. For $n=p^m$, the known asymptotic enumeration of finite $p$-groups gives only $2^{\Theta((\log(n))^3)}$ permutational isomorphism classes of regular permutation groups, by results of Higman and Sims~\cite{Higman1960,Sims1965}. This is negligible compared with $2^{\Omega(n\log(n))}$. Thus, the lower bound in Theorem~\ref{Thm:LowerBound} is supplied overwhelmingly by nonregular minimally transitive groups.
\end{remark}

The construction used to prove Theorem~\ref{Thm:LowerBound} also gives, as a byproduct, minimally transitive permutation groups of large order.

\begin{theorem}\label{Thm:LargeOrder}
Let $p$ be a prime. For every $p$-power $n\geq p^2$, there exists a minimally transitive permutation group of degree $n$ and order $np^{n/p-p}$.
\end{theorem}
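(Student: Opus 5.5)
The plan is to realise the group inside the wreath product $C_p\wr C_k\le S_n$, where $n=p^m$ and $k=n/p=p^{m-1}\ge p$. The domain is $\bbF_p\times\mathbb{Z}_k$, split into $k$ blocks of size $p$. Write $t$ for the $k$-cycle permuting the blocks. Identify the base group $\bbF_p^k$ with the group algebra $A=\bbF_p[x]/(x^k-1)=\bbF_p[x]/(x-1)^k$, with $t$ acting as multiplication by $x$. Since $k$ is a power of $p$, $A$ is uniserial with submodules $A_j=(x-1)^jA$ of dimension $k-j$. The target order is $np^{k-p}=k\cdot p^{k-p+1}$. So I will look for $G$ with $G\cap A=V:=A_{p-1}$, of dimension $k-p+1$, and with $G$ mapping onto the block group $\langle t\rangle$. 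Then $|G|=k\,|V|=np^{k-p}$, as required.

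\textbf{Construction.} Take $g=(c;t)$ with $c\in A$ and set $G=V\langle g\rangle$. Then $g^k=(N(c);1)$, where $N(c)=\sum_i x^ic$ is the all-constant vector with value $\sum_i c_i$. This lies in the socle $A_{k-1}\subseteq V$, so $G\cap A=V$ and $|G|=k|V|$. Transitivity is immediate. Indeed, $G$ is transitive on the blocks, and $V\neq0$ is $t$-invariant, so it contains a vector nonzero in every coordinate (for instance the all-ones vector). Hence the stabiliser of a block is transitive on that block.

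\textbf{Minimality.} Let $H<G$ be transitive. Then $H$ is transitive on the $k$ blocks. Its image in the cyclic $p$-group $\langle t\rangle$ is therefore transitive of degree $k$, hence equals $\langle t\rangle$, and so $HV=G$. Next, $U:=H\cap V$ is normal in $H$, and $H$ acts on $V$ through $\langle t\rangle$, so $U$ is a submodule $A_j$ with $j\ge p-1$. If $U=V$ then $H=G$, so $U\subseteq W:=A_p=[V,G]$. The aim is then to show that $HW=G$. Since $W=[V,G]\le[G,G]\le\Phi(G)$ in the $p$-group $G$, this would force $H=G$, a contradiction. Thus everything reduces to showing that no subgroup $K$ of $\bar G=G/W$ satisfies both $KV/W=\bar G$ and $K\cap V/W=1$, i.e.\ that $V/W\cong C_p$ has no complement in $\bar G$. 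Note that $\bar G$ has order $pk$ and contains $\langle\bar g\rangle$ of index at most $p$.

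\textbf{Main obstacle.} This non-complementation is the step I expect to be hardest, because the naive nonsplit choice fails. The power $g^k$ lies in the socle $A_{k-1}$, which is contained in $W$ once $k>p$. So $\bar g$ has order exactly $k$, and $\langle\bar g\rangle$ is itself a complement in $\bar G=\langle\bar g\rangle\times V/W$. The argument therefore cannot rest on cyclicity of $\bar G$. Instead it must use transitivity more seriously: any complement $K$ lifts to a subgroup $H$ with $H\cap V\subseteq W$, and I would show directly that such an $H$ is intransitive.

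For this I would compute point stabilisers. The stabiliser in $H$ of a block has order $|H|/k=|H\cap A|$. Transitivity within that block requires $H\cap A$ to have a nonzero coordinate there, i.e.\ $H\cap A\ne0$, which is automatic. So the real constraint has to come from a finer choice. I would then modify the construction, replacing $V=A_{p-1}$ by a non-$t$-stable but $g$-stable subgroup, or replacing the top $C_k$ by an elementary abelian regular group of order $p^{m-1}$ (for which $\bbF_p[C_p^{m-1}]$ has many submodules of codimension $p-1$). The goal is to choose the data so that $V/[V,G]$ is generated, modulo $[V,G]$, by $g^k$ or by commutators of lifts of the top generators. That makes $\bar G$ have no complement to $V/[V,G]$, and the Frattini argument above then finishes the proof.
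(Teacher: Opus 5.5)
\textbf{There is a genuine gap: for $n\ge p^3$ the group you build is not minimally transitive, and the repair is only stated as an intention.}

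Your own computation already shows this. Since $g^k\in A_{k-1}\subseteq W$, the subgroup $W\langle g\rangle$ satisfies $W\langle g\rangle\cap A=W$, so it is proper in $G=V\langle g\rangle$. It is also transitive: it is transitive on blocks, and its block stabiliser $W\neq0$ is $t$-invariant, hence nonzero in every coordinate. So your plan to show that every lifted complement is intransitive cannot succeed.

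In fact no choice with a cyclic top group can work. Suppose $G\le C_p\wr C_k$ is minimally transitive. Its point stabiliser $H_0$ lies in $V=G\cap A$, and by Lemma~\ref{Lem:Frattini} it lies in $\Phi(G)$. Since $\Phi(G)$ is normal, the $t$-submodule generated by $H_0$ also lies in $\Phi(G)$. If $H_0\neq1$, this submodule is all of $V$, because $|V:H_0|=p$ and $H_0$ is not normal. Then $G=\langle g\rangle\Phi(G)=\langle g\rangle$ is cyclic, hence regular, which is a contradiction. So only order $n$ is attainable with a cyclic top group.

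Your first proposed modification is also empty. Since $A$ is abelian, conjugation by $g=(c;t)$ on $A$ equals conjugation by $t$, so every $g$-stable subgroup of $A$ is automatically $t$-stable.

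Your second suggestion, an elementary abelian top group $T=\langle t_1,\ldots,t_m\rangle$, is what the paper uses. The key idea you are missing is this: do not prescribe $G\cap B$ and then adjoin a lift. Instead take $G=\langle b_1t_1,\ldots,b_mt_m\rangle$, generated only by lifts of a minimal generating set of $T$.

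Then $G\cap B$ is the normal closure of the images $g_i^p$ and $[g_i,g_j]$ of the defining relators of $T$. Hence $G\cap B=\Phi(G)$ automatically (Lemma~\ref{Lem:K}). Since the point stabiliser lies in $G\cap B$, $G$ is minimally transitive as soon as $G\cap B\neq1$. This gives you for free your requirement that the base part be generated by powers and commutators of lifts.

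What remains is one explicit computation. Identify $B$ with $R=\bbF_p[X_1,\ldots,X_m]/(X_1^p,\ldots,X_m^p)$. Then $g_i^p=X_i^{p-1}b_i$ and $[g_i,g_j]=u(X_ib_j-X_jb_i)$ with $u$ a unit. Taking $b=(1,0,\ldots,0)$ gives $G\cap B=(X_1^{p-1},X_2,\ldots,X_m)$, which has codimension $p-1$ in $R$. Therefore $|G|=p^{p^m-p+1}\cdot p^m=np^{n/p-p}$ for $n=p^{m+1}$.
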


\begin{remark}\label{Rem:LargeOrder}
Let $p$ be a prime, and let $n$ be a $p$-power. For a transitive permutation group of degree $n$, its Sylow $p$-subgroups are also transitive. Consequently, the minimally transitive permutation groups of degree $n$ are all $p$-groups, and hence have order at most $|S_n|_p=p^{(n-1)/(p-1)}$.
Together with Theorem~\ref{Thm:LargeOrder}, this shows that, along the powers of each fixed prime $p$, the maximum order of a minimally transitive permutation group of degree $n$ is $2^{\Theta(n)}$.
\end{remark}

Theorem~\ref{Thm:LargeOrder} is proved in Section~\ref{Sec:Remark}. In the same section, we also discuss what the preceding results imply about the asymptotic enumeration of vertex-transitive graphs and digraphs. One immediate consequence of Theorem~\ref{Thm:UpperBound} is the following corollary, which improves the upper bound $2^{(1+o(1))n(\log(n))^2}$ of Babai and S\'{o}s~\cite[Theorem~6.1]{BS1985} to the correct order of magnitude in the exponent. Here, by a digraph (including graph as a special case) we mean one whose arc set is an arbitrary set of ordered pairs of vertices, while a labelled digraph of order $n$ has the fixed vertex set $\{1,\ldots,n\}$, and two such labelled digraphs are regarded as distinct whenever their arc sets differ.

\begin{corollary}\label{Cor:VT}
As the integer $n\to\infty$, the number of labelled vertex-transitive graphs of order $n$ is $2^{\Theta(n\log(n))}$, and so is the number of labelled vertex-transitive digraphs of order $n$.
\end{corollary}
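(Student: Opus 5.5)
Both bounds are needed for each of the two counts: the number of labelled vertex-transitive graphs and the number of labelled vertex-transitive digraphs of order $n$.

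\emph{Upper bound.} A labelled digraph $\Gamma$ on $\{1,\ldots,n\}$ is vertex-transitive exactly when $\mathrm{Aut}(\Gamma)\leq S_n$ is transitive. In that case $\mathrm{Aut}(\Gamma)$ contains a minimally transitive subgroup $G$, and $\Gamma$ is a union of orbitals of $G$, that is, of orbits of $G$ on ordered pairs. Since $G$ is transitive, each orbital meets $\{1\}\times\{1,\ldots,n\}$, so $G$ has at most $n$ orbitals. Hence each minimally transitive $G$ accounts for at most $2^n$ digraphs. By Theorem~\ref{Thm:UpperBound} the number of vertex-transitive digraphs is at most $2^{O(n\log(n))}\cdot 2^n=2^{O(n\log(n))}$. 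Graphs are a subset of digraphs, so the same bound holds for them.

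\emph{Lower bound.} It suffices to treat graphs, since every graph is a digraph. I would use Cayley graphs of the cyclic group, i.e.\ circulants. Fix the cyclic permutation $c=(1\,2\,\cdots\,n)$. Each symmetric subset $S\subseteq\mathbb{Z}_n\setminus\{0\}$ gives a circulant graph invariant under $\langle c\rangle$, hence vertex-transitive. For each $\sigma\in S_n$, the graph $\Gamma_S^\sigma$ is again vertex-transitive and labelled. The number of distinct labelled graphs in the orbit of $\Gamma_S$ is $n!/|\mathrm{Aut}(\Gamma_S)|$, so it suffices to exhibit one circulant with $|\mathrm{Aut}(\Gamma_S)|\leq 2^{o(n\log(n))}$. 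Then $\log(n!/|\mathrm{Aut}(\Gamma_S)|)=(1-o(1))\,n\log(n)=\Omega(n\log(n))$.

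\emph{Main obstacle.} The hard step is exhibiting, for every $n$, a circulant with a small automorphism group. A clean route is a counting argument over the set $\mathcal{S}$ of all $2^{\lfloor n/2\rfloor}$ symmetric connection sets $S$ (up to the ambiguity at $n/2$). The set $\mathcal{C}$ of labelled vertex-transitive graphs contains $\Gamma_S^\sigma$ for every $S\in\mathcal{S}$ and $\sigma\in S_n$. Each graph in $\mathcal{C}$ arises from at most
\[
\#\{(S,\sigma)\text{ giving it}\}\leq |\mathcal{S}|\cdot\max_S|\mathrm{Aut}(\Gamma_S)|
\]
pairs, which still requires a bound on automorphism groups. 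The simplest approach is therefore to cite known results: almost all circulants, and in fact almost all Cayley graphs of abelian groups, have automorphism group of size $O(n)$ (results of Dobson, Spiga and Verret for abelian groups, and classical results on circulants). This immediately gives at least $n!/O(n)=2^{\Omega(n\log(n))}$ labelled vertex-transitive graphs.

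A self-contained alternative that avoids this machinery uses a coarser bound. Take any vertex-transitive graph $\Gamma_0$ with $\mathrm{Aut}(\Gamma_0)$ of size $2^{o(n\log(n))}$. For instance, a connected $k$-regular graph with $k$ fixed has $|\mathrm{Aut}|\leq n\,(k!)^{n}=2^{O(n)}$, since fixing one vertex and the permutations at each vertex determines the automorphism by connectivity. So I would choose the cycle $C_n$ ($n\geq 3$), a vertex-transitive connected $2$-regular graph with $|\mathrm{Aut}(C_n)|=2n$. Its labelled copies number $n!/(2n)=2^{\Theta(n\log(n))}$, and all are vertex-transitive. This settles the lower bound for both graphs and digraphs, and I would present this simple argument, with the only real content being the upper bound via Theorem~\ref{Thm:UpperBound} and the orbital count.
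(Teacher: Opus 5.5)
Your proposal is correct and essentially matches the paper's proof. For the upper bound, you take a minimally transitive subgroup of the automorphism group, note that it has at most $n$ orbitals and hence accounts for at most $2^n$ digraphs, and multiply by Pyber's bound; for the lower bound, you count the $n!/(2n)$ labellings of the $n$-cycle. The detour through circulants and cited automorphism-group results is unnecessary and can be dropped, as you yourself conclude.
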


The Classification of Finite Simple Groups enters our argument only through Theorem~\ref{Lem:AG1982}, which is used only in the proof of Theorem~\ref{Thm:UpperBound}. All other arguments in the paper, including the entire proof of the lower bound, are independent of the Classification.

\section{Preliminaries}\label{Sec:Pre}

In a group, we use the convention $x^y=y^{-1}xy$ for conjugation and $[x,y]=x^{-1}y^{-1}xy$ for commutator.
For sets $A$ and $B$, denote the set of functions from $A$ to $B$ by $\mathrm{Fun}(A,B)$.
For nonnegative functions $f$ and $g$ with $g(n)>0$ for all sufficiently large $n$, we write $f(n)=O(g(n))$ if there exists a constant $C>0$ such that $f(n)\leq Cg(n)$ for all sufficiently large $n$, and $f(n)=\Omega(g(n))$ if there exists a constant $c>0$ such that $f(n)\geq cg(n)$ for all sufficiently large $n$. We write $f(n)=\Theta(g(n))$ if $f(n)=O(g(n))$ and $f(n)=\Omega(g(n))$.

Recall the following elementary probabilistic inequality.

\begin{lemma}[Chebyshev's inequality]\label{Lem:Chebyshev}
Let $X$ be a real-valued random variable with finite mean $\mu$ and finite standard deviation $\sigma$. Then for every real number $a>0$,
\[
\Pr(|X-\mu|\geq a)\leq\frac{\sigma^2}{a^2}.
\]
\end{lemma}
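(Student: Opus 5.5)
This is Chebyshev's inequality, a standard fact, and I would prove it by reducing it to Markov's inequality applied to the squared deviation. Set $Y=(X-\mu)^2$. Then $Y$ is a nonnegative random variable with $\mathbb{E}[Y]=\sigma^2$, which is finite by hypothesis. Since $a>0$, the events $\{|X-\mu|\geq a\}$ and $\{Y\geq a^2\}$ coincide, so it suffices to show that $\Pr(Y\geq a^2)\leq \mathbb{E}[Y]/a^2$.

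For Markov's inequality I would use the pointwise comparison of random variables
\[
a^2\,\mathbf{1}_{\{Y\geq a^2\}}\leq Y.
\]
This holds on the event $\{Y\geq a^2\}$ because there the left side equals $a^2\leq Y$. It holds off that event because there the left side is $0\leq Y$.

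Taking expectations, and using monotonicity and linearity of expectation, gives
\[
a^2\Pr(Y\geq a^2)\leq \mathbb{E}[Y]=\sigma^2.
\]
Dividing by $a^2>0$ yields the claimed bound.

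None of these steps is a real obstacle. The only points needing care are that the expectation of $Y$ exists, which follows from the finiteness of $\sigma$, and that $a>0$, which is used both for the equivalence of the two events and for the final division.
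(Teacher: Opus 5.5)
Your proof is correct and complete. The pointwise bound $a^2\mathbf{1}_{\{Y\geq a^2\}}\leq Y$ for $Y=(X-\mu)^2$, followed by taking expectations, is the standard derivation of Chebyshev's inequality from Markov's; the paper itself gives no proof of this lemma, simply recalling it as a well-known elementary fact (used only to show $\dim_{\bbF_p}(U)=(1-o(1))p^m$), so there is nothing further to compare.
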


The Frattini subgroup of a group $G$ is denoted by $\Phi(G)$.
The following group-theoretic observation is well known.

\begin{lemma}\label{Lem:Frattini}
Let $G$ be a transitive permutation $p$-group with prime $p$, and let $H$ be a point stabilizer in $G$. Then $G$ is minimally transitive if and only if $H\leq\Phi(G)$.
\end{lemma}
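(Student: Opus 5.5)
The plan is to prove both directions using only two standard facts: the Frattini subgroup $\Phi(G)$ of a finite group is the set of non-generators, and in a finite $p$-group every maximal subgroup is normal of index $p$. The link to transitivity is the Frattini argument in permutation form. A subgroup $K\leq G$ is transitive if and only if $G=KH$, since $K$ is transitive exactly when $K$ meets every coset of $H$ that corresponds to a point of the orbit.

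For the direction ``$H\leq\Phi(G)$ implies minimally transitive'', let $K\leq G$ be transitive. Then $G=KH\leq K\Phi(G)$, so $G=K\Phi(G)$. Because the elements of $\Phi(G)$ are non-generators, this forces $G=K$. Hence $G$ has no proper transitive subgroup. This direction does not even need $G$ to be a $p$-group.

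For the converse, suppose $G$ is minimally transitive and, for a contradiction, that $H\not\leq\Phi(G)$. Then there is a maximal subgroup $M$ of $G$ with $H\not\leq M$. Since $G$ is a $p$-group, $M$ is normal in $G$, so $MH$ is a subgroup. It properly contains $M$, and maximality gives $MH=G$. By the criterion above, $M$ is then a transitive proper subgroup of $G$, which contradicts minimal transitivity. Therefore $H\leq\Phi(G)$.

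The statement concerns a single point stabilizer, but every point stabilizer is conjugate to $H$ and $\Phi(G)$ is normal, so the choice of point does not matter. There is no real obstacle here. The only step that uses the $p$-group hypothesis is the normality of maximal subgroups, which ensures $MH$ is a subgroup. The one thing to verify carefully is the equivalence ``$K$ transitive $\iff G=KH$''. This follows from the orbit–stabilizer correspondence between the points and the right cosets $Hg$: $K$ is transitive exactly when every coset $Hg$ meets $K$, that is, when $G=HK$, and $HK=KH$ by taking inverses.
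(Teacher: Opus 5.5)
Your proof is correct and essentially the same as the paper's. Both rest on the criterion that $K$ is transitive if and only if $G=KH$, together with the normality of maximal subgroups of a $p$-group. The only cosmetic difference is in the direction $H\leq\Phi(G)\Rightarrow$ minimally transitive: you invoke the non-generator property of $\Phi(G)$, whereas the paper observes directly that no maximal subgroup containing $H$ can be transitive, and the two arguments are equivalent.
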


\begin{proof}
For a maximal subgroup $M$ of $G$, since $M$ is normal, $MH\neq G$ if and only if $H\leq M$. Thus, $G$ has no transitive maximal subgroup if and only if $H$ is contained in every maximal subgroup of $G$. Hence the lemma follows.
\end{proof}

In the next lemma, a maximal solvable subgroup means a maximal one (with respect to set containment) among solvable subgroups.

\begin{lemma}[{\cite[Lemma~3.2(iii)]{Pyber1993}}]\label{Lem:Pyber1993}
For a positive integer $n$, the number of conjugacy classes of maximal solvable subgroups of $S_n$ is at most $2^{17n}$.
\end{lemma}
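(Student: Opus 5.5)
We would prove this by reducing every maximal solvable subgroup to primitive pieces, in the spirit of Pyber's original argument in~\cite{Pyber1993}. Write $f(n)$ for the number of conjugacy classes of maximal solvable subgroups of $S_n$, $t(n)$ for the number of those that are transitive, and $\pi(n)$ for the number of conjugacy classes of primitive solvable subgroups of $S_n$. The plan has three steps: show $\pi(n)$ is tiny, get a recursion for $t(n)$, and pass from $t$ to $f$ through a partition sum.

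\emph{Step 1: primitive groups.} A primitive solvable group is affine. So $n=p^d$ and $G=V\rtimes G_0$ with $G_0\le \GL(d,p)$ irreducible and solvable. By Pálfy's bound, $|G|\le n^{c}$ for an absolute constant $c$. Hence $G$ is generated by at most $O(\log n)$ elements, and
\[
\pi(n)\le (n!)^{O(\log n)} \text{ is far too weak, so instead } \pi(n)\le |\GL(d,p)|^{O(\log n)}\le 2^{O((\log n)^3)}.
\]
The second bound counts generating tuples of $G_0$ inside $\GL(d,p)$, which determine $G_0$ up to conjugacy and hence $G$ up to conjugacy. In particular $\pi(n)\le 2^{\epsilon n}$ for large $n$, and $\pi(n)\le 2^{Cn}$ for all $n$ with an explicit small $C$.

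\emph{Step 2: the transitive recursion.} Let $G$ be maximal solvable and transitive but imprimitive. Choose a block system whose induced action on the $m=n/k$ blocks is primitive. Then $G\le H\wr P$, where $H$ is the (solvable) group induced on a block by its setwise stabilizer and $P$ is the primitive solvable group induced on the blocks. The wreath product $H\wr P$ is solvable, so maximality forces $H$ to be maximal solvable transitive and $G=H\wr P$. Its conjugacy class is determined by the classes of $H$ and $P$, which gives
\[
t(n)\le \pi(n)+\sum_{km=n,\,m>1} t(k)\,\pi(m).
\]
Similarly, an intransitive maximal solvable group equals the direct product of its transitive constituents on its orbits. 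Its class is then determined by the multiset of pairs (orbit size, class of constituent), so $f(n)\le \sum_{\lambda\vdash n}\prod_i t(\lambda_i)$.

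\emph{Step 3: the induction, which is the main obstacle.} First we would prove $t(n)\le 2^{an}$ by induction. The divisor sum has at most $d(n)$ terms, each at most $2^{an/2}\pi(m)$ because $k\le n/2$, so it is dominated by $2^{an}$ once $a$ is moderate. Then $f(n)\le p(n)\,2^{an}$ with $p(n)=2^{O(\sqrt n)}$ the partition number. The difficulty is extracting the explicit constant $17$ uniformly for all $n$, not just asymptotically, while keeping the sum over partitions under control. For that we would use the generating-function bound
\[
\sum_n f(n)x^n\le \prod_k (1-x^k)^{-t(k)}
\]
evaluated at $x=2^{-17}$, together with explicit small-$n$ values of $\pi$ via Pálfy's $|G|\le 24^{-1/3}n^{13/4}$. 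If the constants do not close, the fallback is to absorb the partition factor into a slightly weaker exponent for $t$.
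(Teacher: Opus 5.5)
The paper gives no proof of this lemma (it is quoted from Pyber), so your argument has to stand on its own. The structural reduction is sound. A primitive solvable group is conjugate to $V\rtimes G_0$ with $G_0\le\GL(d,p)$ irreducible, so its class is determined by $G_0$. A transitive imprimitive maximal solvable group equals $H\wr P$ for a block system with primitive block action, with $H$ transitive and maximal solvable. An intransitive one is the direct product of its constituents. So your recursions for $t(n)$ and $f(n)$ are valid.

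The gap is that the only nontrivial content of the statement, the constant $17$, is never established. Step~1 leaves $C$ unspecified. Step~3 is a plan whose fallback, if invoked, would give a weaker bound than the one stated. As written you obtain $f(n)\le 2^{O(n)}$. That is incidentally all the paper uses, since $2^{17n}$ is absorbed into $2^{o(n\log(n))}$ in Theorem~\ref{Thm:UpperBoundDetail}, but it is not the lemma.

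The numbers close easily with your own ingredients, and no generating function is needed.
\begin{enumerate}[\rm(1)]
\item Pálfy's bound gives $|G_0|\le 24^{-1/3}n^{9/4}$, so $G_0$ needs fewer than $\frac{9}{4}\log(n)$ generators. Counting generator tuples in $\GL(d,p)$, whose order is less than $p^{d^2}$, gives $\log(\pi(p^d))\le\frac{9}{4}d^3(\log(p))^2\le 9p^d$. The worst case is $p=2$, $d=4$.
\item Prove $t(n)\le 2^{16n}$ by induction. For $k,m\ge2$ with $km=n\ge4$, the function $m\mapsto 16n/m+9m$ is convex on $[2,n/2]$, so $16k+9m\le 8n+18$. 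Your sum also contains the $k=1$ term, which contributes a second $\pi(n)$. Hence $t(n)\le 2^{9n+1}+n2^{8n+18}\le 2^{16n}$ for composite $n$. For prime $n$ you simply have $t(n)\le 2\pi(n)$.
\item The number of partitions of $n$ is at most the number $2^{n-1}$ of compositions. Therefore $f(n)\le 2^{n-1}\cdot 2^{16n}<2^{17n}$.
\end{enumerate}
With these lines added, your proof is complete.
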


We also need the Dixon's bound on the orders of solvable permutation groups.

\begin{theorem}[{\cite[Theorem~3]{Dixon1967}}]\label{Lem:Dixon1967}
For a positive integer $n$, every solvable subgroup of $S_n$ has order at most $24^{(n-1)/3}$.
\end{theorem}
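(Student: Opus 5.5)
We would prove the bound $|G|\leq 24^{(n-1)/3}$ for every solvable $G\leq S_n$ by strong induction on $n$. The idea is to split into the intransitive, imprimitive and primitive cases. The first two reduce to smaller degrees, because the exponent $(n-1)/3$ behaves additively under the constructions involved. The primitive case is handled using the structure of solvable primitive groups. The base cases are trivial: for $n\leq 2$ we have $|G|\leq 2\leq 24^{1/3}$, and the case $n=1$ gives $1\leq 1$.

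\emph{Intransitive case.} Suppose $G$ has an invariant decomposition $\{1,\ldots,n\}=\Delta_1\sqcup\Delta_2$ with $|\Delta_i|=n_i\geq 1$. Then $G$ embeds in $G^{\Delta_1}\times G^{\Delta_2}$, and both constituents are solvable. By induction,
\[
|G|\leq 24^{(n_1-1)/3}\,24^{(n_2-1)/3}\leq 24^{(n-1)/3}.
\]

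\emph{Imprimitive case.} Suppose $G$ is transitive with a block system of $m$ blocks of size $k$, where $n=mk$ and $1<k<n$. Then $G$ embeds in $H\wr K$, where $H\leq S_k$ is the (solvable) group induced by a block stabilizer on its block and $K\leq S_m$ is the (solvable) group induced on the blocks. Induction gives
\[
|G|\leq |H|^m|K|\leq 24^{m(k-1)/3+(m-1)/3}=24^{(n-1)/3}.
\]

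\emph{Primitive case.} Now let $G$ be primitive and solvable. A minimal normal subgroup $V$ is elementary abelian and regular, so $n=p^d$ and $G=V\rtimes G_0$. Here $G_0$ is a point stabilizer, acting faithfully and irreducibly on $V\cong\bbF_p^d$, so $G_0\leq\GL(d,p)$. The crude estimate $|G|\leq p^d|\GL(d,p)|\leq p^{d^2+d}$ suffices whenever
\[
(d^2+d)\log(p)\leq \tfrac{\log(24)}{3}\,(p^d-1).
\]
Since the right side grows exponentially in $d$ and in $p$, this is a routine check. It holds for $d=1$ and all $p$, since $|G|\leq p(p-1)$. It also holds for all $(p,d)$ with $p^d\geq 16$, and in particular whenever $p\geq 5$ and $d\geq 2$, or $p=3$ and $d\geq 3$, or $p=2$ and $d\geq 4$. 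Checking this monotonicity carefully is the only calculation needed in the generic range.

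The main obstacle is the handful of small degrees where the crude estimate fails or is tight; these we treat by hand using explicit solvable subgroups of $\GL(d,p)$.
\begin{itemize}
\item For $n=4$: $G_0\leq\GL(2,2)\cong S_3$, so $|G|\leq 24=24^{(4-1)/3}$. This shows the bound is sharp, attained by $S_4$.
\item For $n=8$: $\GL(3,2)$ is not solvable. Its proper subgroups have order at most $24$, so $|G|\leq 192<24^{7/3}$.
\item For $n=9$: $\GL(2,3)$ is solvable of order $48$, so $|G|\leq 432<24^{8/3}$.
\end{itemize}
It remains to confirm that no other pair $(p,d)$ escapes the crude inequality; we would do this by direct evaluation for the few pairs with $p^d<16$. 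This completes the induction.
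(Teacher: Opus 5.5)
The paper gives no proof of this statement. It quotes Dixon's Theorem~3 as a black box and uses it only for the estimate $|M|\leq 24^{(n-1)/3}$ in the proof of Theorem~\ref{Thm:UpperBoundDetail}.

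Your argument is a correct, self-contained proof along the standard lines. The exponent behaves well under the two reductions:
\[
(n_1-1)+(n_2-1)\leq n_1+n_2-1
\quad\text{and}\quad
m(k-1)+(m-1)=mk-1 .
\]
The first handles direct products in the intransitive case, and the second (an exact identity) handles wreath products in the imprimitive case. In the primitive case the trivial bound $|G|\leq p^d|\GL(d,p)|$ already suffices, so no structure theory of solvable linear groups is needed.

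A few remarks on the primitive case:
\begin{itemize}
\item Only faithfulness of $G_0$ on $V$ is used, not irreducibility.
\item The non-solvability of $\GL(3,2)$ is unnecessary, since even $|\mathrm{AGL}(3,2)|=8\cdot168=1344<24^{7/3}\approx1661$.
\item The case $n=9$ satisfies your cruder inequality directly, since $3^6=729<24^{8/3}\approx4792$.
\end{itemize}
So the only genuine failures of $p^{d^2+d}\leq24^{(p^d-1)/3}$ are $d=1$ with $p\in\{2,3\}$ and $(p,d)\in\{(2,2),(2,3)\}$. You cover all of these, using $p(p-1)$ when $d=1$ and the exact value $p^d|\GL(d,p)|$ (or better) for the other two.

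Compared with the citation, your route makes explicit that this ingredient is elementary and independent of the Classification. This fits the paper's remark that the Classification enters only through Theorem~\ref{Lem:AG1982}. The citation, for its part, keeps the paper short.
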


The following is~\cite[Theorem~A]{AG1982}, whose proof used the Classification of Finite Simple groups.

\begin{theorem}[Aschbacher--Guralnick]\label{Lem:AG1982}
Every finite group $G$ has a solvable subgroup $H$ and an element $g\in G$ such that $G=\langle H,H^g\rangle$.
\end{theorem}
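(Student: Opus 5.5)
The plan is induction on $|G|$, reducing everything to a statement about nonabelian simple groups. That reduced statement is where the Classification enters; the reduction itself is elementary.

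\textbf{Base step (simple groups).} I would first prove that for every nonabelian finite simple group $T$ there is a solvable subgroup $S\leq T$ and $x\in T$ with $T=\langle S,S^x\rangle$. I would prove this in a strengthened form: $S$ can be taken to contain a Sylow $2$-subgroup of $T$, or more generally to be "large" enough that it survives the lifting steps below. The check goes family by family through the Classification.
\begin{enumerate}[(i)]
\item For groups of Lie type, take $S$ to be a Borel subgroup $B$. Taking $x$ to be a representative of the longest Weyl group element gives $B\cap B^x$ equal to a torus. The subgroup $\langle B,B^x\rangle$ contains both the positive and the negative root subgroups, hence is all of $T$.
\item For alternating groups $A_n$, take $S$ to be a suitable solvable subgroup generated by a few cycles, for instance the stabiliser of a partition into blocks of size at most $4$ intersected with $A_n$. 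Then choose $x$ so that the conjugate partitions are "connected"; the generated group is then primitive and contains a $3$-cycle, so it equals $A_n$ by Jordan's theorem.
\item For the sporadic groups, use explicit solvable subgroups such as Sylow normalisers, and verify the generation from known maximal subgroup lists.
\end{enumerate}

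\textbf{Inductive step.} Let $N$ be a minimal normal subgroup of $G$. By induction there is a solvable $A/N\leq G/N$ and $g\in G$ with $G=\langle A,A^g\rangle N$. There are two cases.
\begin{enumerate}[(i)]
\item If $N$ is solvable, then $A$ is solvable. Since $N\leq A$, we get $N\leq\langle A,A^g\rangle$, so $\langle A,A^g\rangle=G$ and we take $H=A$.
\item If $N=T_1\times\cdots\times T_k$ with the $T_i\cong T$ nonabelian simple, I would find a solvable subgroup $H\leq A$ with $HN=A$. One route is the Frattini argument $A=N\,\Nor_A(P)$ with $P$ a Sylow $2$-subgroup of $N$, applied inductively whenever $\Nor_A(P)$ is a proper subgroup of $G$. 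I would then enlarge $H$ by the solvable group from the base step inside $T_1$, arranged so that $H\cap T_1$ contains $S$.
\end{enumerate}
Then I would adjust $g$ by a suitable element $n\in N$, replacing $g$ with $gn$, so that $\langle H,H^{gn}\rangle$ contains $\langle S,S^{x}\rangle=T_1$. Since $\langle H,H^{gn}\rangle N=G$ acts transitively on the factors $T_i$, it then contains $N$, and hence equals $G$.

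\textbf{Main obstacle.} I expect the hardest part to be compatibility in the nonabelian case. The solvable supplement $H$ of $N$ in $A$ must meet $T_1$ in a subgroup that is still large enough to generate $T_1$ together with a single conjugate. At the same time, the conjugating element must be chosen simultaneously modulo $N$ and inside $N$. This is why I would prove the base step in the strengthened form, with $S$ containing a Sylow $2$-subgroup and $x$ flexible within $N$: then any solvable supplement obtained from Sylow normalisers automatically contains a suitable conjugate of $S$. Verifying this strengthened simple-group statement uniformly across all families, especially the small-rank Lie type cases and the sporadic groups, is the Classification-dependent core of the argument.
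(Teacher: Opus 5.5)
The paper does not prove this theorem; it quotes it from Aschbacher--Guralnick as its only Classification-dependent input. On its own merits, your architecture is reasonable: a minimal normal subgroup $N$, the solvable case, a Frattini argument, and reduction to simple groups. But the step you flag as the main obstacle is not resolved, and as written it fails.

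\textbf{Where it fails.} There are three problems.
\begin{enumerate}[(i)]
\item ``$S$ contains a Sylow $2$-subgroup'' is the wrong strengthening. The supplement $H=\Nor_A(P)$, with $P=P_1\times\cdots\times P_k\in\mathrm{Syl}_2(N)$, meets $T_1$ in $\Nor_{T_1}(P_1)$. This contains a conjugate of $S$ only if $S$ \emph{normalizes} a Sylow $2$-subgroup. For example, in $\mathrm{PSL}_2(7)$ the Sylow $2$-subgroup $D_8$ is self-normalizing, so no conjugate of $S_4\geq D_8$ lies in $\Nor_T(P)$.
\item Your base-step subgroups do not even contain Sylow $2$-subgroups: for $\mathrm{PSL}_2(q)$ with $q\equiv 3\pmod 4$, the Borel subgroup has odd order $q(q-1)/2$. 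Also, ``enlarging $H$ by $S$'' is uncontrolled, since $\langle H,S\rangle$ need not be solvable.
\item The correction $g\mapsto gn$ ignores that $g$ permutes the factors. One has $H^{gn}\cap T_1=(H\cap T_j)^{gn}$ with $T_j=T_1^{g^{-1}}$. So putting $S$ inside $H\cap T_1$ alone says nothing about what $H^{gn}$ contributes to $T_1$ when $j\neq 1$. Moreover, $g$ may identify $T_j$ with $T_1$ via an outer automorphism, while $n$ contributes only inner automorphisms of $T_1$ (through its $T_1$-coordinate). So the class of $S$ must be $\mathrm{Aut}(T)$-invariant. That is automatic for Sylow normalizers but not for ad hoc choices: in $A_6$ the exceptional outer automorphism swaps the intransitive and transitive classes of $S_4$.
\end{enumerate}

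\textbf{What makes the reduction work.} Take $H=\Nor_A(P)$ itself, with no enlargement.
\begin{enumerate}[(a)]
\item $HN=A$ by the Frattini argument.
\item $H$ is solvable with no induction needed: $H\cap N=\prod_i\Nor_{T_i}(P_i)$, each $\Nor_{T_i}(P_i)/P_i$ has odd order and is solvable by Feit--Thompson, and $H/(H\cap N)\cong A/N$.
\item $H\cap T_i=\Nor_{T_i}(P_i)$ uniformly in $i$. As the $T_1$-coordinate of $n\in N$ varies, $P_j^{gn}$ runs through all of $\mathrm{Syl}_2(T_1)$.
\item Since $N\leq A^g$, we have $A^{gn}=A^g$, so $L=\langle H,H^{gn}\rangle$ satisfies $LN\geq\langle A,A^g\rangle=G$. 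Hence $L$ is transitive on $\{T_1,\ldots,T_k\}$.
\item Choose $n$ with $\langle P_1,P_j^{gn}\rangle=T_1$. Then $T_1\leq L$, which forces $N\leq L$ and so $L=G$.
\end{enumerate}

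Thus the simple-group input your induction actually needs is an $\mathrm{Aut}(T)$-invariant statement: every nonabelian simple $T$ is generated by two Sylow $2$-subgroups, or at least $T=\langle\Nor_T(P),\Nor_T(Q)\rangle$ for some $P,Q\in\mathrm{Syl}_2(T)$. That statement, not the Borel, Jordan and sporadic checks as you set them up, is the Classification-dependent core still to be supplied.

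Your Borel choice can be salvaged for Lie type in characteristic $p$ by running the Frattini argument with Sylow $p$-subgroups of $N$, since $B=\Nor_T(U)$ and $T=\langle B,B^{w_0}\rangle$. But then you need an analogous solvable Sylow-normalizer generation statement for the alternating and sporadic cases.
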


\section{Proof of Theorem~\ref{Thm:UpperBound}}\label{Sec:UpperBound}

\begin{lemma}\label{Lem:AGplus}
Every finite minimally transitive group $G$ has an orbit-minimal solvable subgroup $H$ and an element $g\in G$ such that $G=\langle H, g\rangle$.
\end{lemma}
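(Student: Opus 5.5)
The plan is to derive this from Theorem~\ref{Lem:AG1982} together with the minimal transitivity of $G$. The key observation is that a subgroup of $G$ is all of $G$ as soon as it is transitive. So it suffices to produce a solvable $H$ and an element $g$ such that $\langle H,g\rangle$ is transitive. Here I read ``orbit-minimal'' as follows: $H$ has no proper subgroup with the same orbits as $H$ on the underlying set $\Omega$. Replacing a solvable subgroup by an orbit-minimal one changes only the group and not its orbits, so the argument below is insensitive to this replacement.

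\emph{Step 1.} Apply Theorem~\ref{Lem:AG1982} to the abstract group $G$. This gives a solvable subgroup $K\le G$ and $g\in G$ with $G=\langle K,K^g\rangle$. Since $K^g=g^{-1}Kg\subseteq\langle K,g\rangle$, we get $\langle K,g\rangle=G$.

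\emph{Step 2.} Among the subgroups of $K$ whose orbits on $\Omega$ coincide with those of $K$, choose one, $H$, of minimal order. Then $H$ is solvable, being a subgroup of $K$. It is also orbit-minimal: a proper subgroup of $H$ with the same orbits as $H$ would have the same orbits as $K$ and smaller order.

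\emph{Step 3.} Show that $L=\langle H,g\rangle$ is transitive. Let $\Delta$ be an orbit of $L$. Since $H\le L$, the set $\Delta$ is a union of $H$-orbits, which are exactly the $K$-orbits, so $\Delta$ is $K$-invariant. It is also $g$-invariant. Hence $\Delta$ is invariant under $\langle K,g\rangle=G$. As $G$ is transitive, $\Delta=\Omega$. By minimal transitivity, $L=G$.

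The only point needing care is Step~3: passing from ``same orbits as $K$'' to ``$K$-invariance of every $L$-orbit''. This holds because a union of $K$-orbits is automatically $K$-invariant. Everything else is formal, and the one deep input (CFSG) is confined to Theorem~\ref{Lem:AG1982}.
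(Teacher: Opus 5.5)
Your proof is correct and takes essentially the paper's approach. Aschbacher--Guralnick gives $G=\langle K,g\rangle$ with $K$ solvable, and minimal transitivity shows that any subgroup with the same orbits as the solvable subgroup still generates $G$ together with $g$. The difference is cosmetic: the paper minimizes $|H|$ over all solvable $H$ admitting some $g$ and argues by contradiction, whereas you fix $g$, minimize among subgroups of $K$ with the same orbits, and spell out the transitivity step that the paper leaves implicit.
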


\begin{proof}
By Theorem~\ref{Lem:AG1982}, the finite $G$ is generated by some solvable subgroup $H$ and element $g\in G$.
Choose $H$ of minimum order subject to the existence of some $g\in G$ with $G=\langle H,g\rangle$, and fix such an element $g$.
To prove the lemma, we only need to show that $H$ is orbit-minimal.
Suppose for a contradiction that there exists a proper subgroup $K$ of $H$ that has the same orbits as $H$.
Then since $\langle H,g\rangle=G$ is transitive, it follows that $\langle K,g\rangle$ is also transitive. By the minimal transitivity of $G$, we must have $\langle K,g\rangle=G$. However, since $K$ is also solvable, this contradicts the minimality of $|H|$. This completes the proof.
\end{proof}

A permutation group $H$ is called \emph{orbit-minimal} if every proper subgroup of $H$ has strictly more orbits than $H$ has. The following theorem is not only needed in our proof of Theorem~\ref{Thm:UpperBound} but also of independent interest. Its full statement of Theorem~\ref{Lem:Lovasz} is due to L\'{a}szl\'{o} Lov\'{a}sz and was recorded in~\cite[Theorem~1.5]{Pyber1991}. Since no proof was given there, we include an elementary proof obtained by adapting the argument of Neumann and Vaughan-Lee~\cite{NV1977}.

\begin{theorem}[Lov\'{a}sz]\label{Lem:Lovasz}
For a positive integer $n$, every orbit-minimal subgroup of $S_n$ with $k$ orbits can be generated by at most $\log(n-k+1)$ elements.
\end{theorem}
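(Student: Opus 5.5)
We will argue by building a chain of subgroups and showing the orbit count drops geometrically, in the style of Neumann and Vaughan-Lee. Let $H\leq S_n$ be orbit-minimal with orbits $\Delta_1,\ldots,\Delta_k$ of sizes $n_1,\ldots,n_k$, so $\sum_i n_i=n$.

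\textbf{Setup.} Choose generators greedily. Start with $H_0=1$, which has $n$ orbits. Given $H_j<H$, the orbits of $H_j$ refine those of $H$. Since $H$ is orbit-minimal, any proper subgroup of $H$ has strictly more orbits than $H$. Hence if $H_j\neq H$, then $H_j$ has more orbits than $H$, and we may pick $h\in H$ with $\langle H_j,h\rangle$ having fewer orbits than $H_j$. Such an $h$ exists, since otherwise every element of $H$ preserves each $H_j$-orbit, and then $H_j$ would have the same orbits as $H$. The naive version of this only decreases the orbit count by one per step, giving $n-k$ generators. To get a logarithmic bound we must choose $h$ more cleverly.

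\textbf{Key step (halving).} Consider the measure $d(K)=(\text{number of orbits of }K)-k$, the excess over the orbits of $H$, so $d(H_0)=n-k$. Adapting Neumann--Vaughan-Lee, we aim to find $h\in H$ with
\[
d(\langle H_j,h\rangle)\leq d(H_j)/2 .
\]
Within each $H$-orbit $\Delta_i$, the $H_j$-orbits form a block-like partition that $H$ does not necessarily preserve. The standard trick is to average. Consider the permutation action of $H$ on $\Delta_i$ and pick $h$ uniformly at random from $H$. For each $H_j$-orbit $O\subseteq\Delta_i$, when $\Delta_i$ contains more than one $H_j$-orbit, the image $O^h$ meets $O'$ for a randomly distributed orbit $O'$. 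This fusing merges orbits, and the joins of orbits form a graph on the $H_j$-orbits in $\Delta_i$. The number of orbits of $\langle H_j,h\rangle$ within $\Delta_i$ is at most the number of connected components of the graph with edges $O\sim O'$ whenever $O^h\cap O'\neq\emptyset$. Each vertex has at least one outgoing edge, and a functional graph on $r$ vertices has at most $r/2$ components if there are no fixed points (loops). So it suffices that $O^h\neq O$ in a suitable sense. Then the components number at most (number of non-loop vertices)/2 plus the loops.

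\textbf{Main obstacle.} The main difficulty is guaranteeing, with one element $h$, that the expected number of "loops" is small simultaneously across all orbits $\Delta_i$ with $r_i\geq 2$, where $r_i$ is the number of $H_j$-orbits in $\Delta_i$. We would show that for uniformly random $h\in H$, $\Pr(O^h\cap O\ne\emptyset)$ is small. However, this is false in general, because $O^h\cap O$ can be nonempty easily.

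A cleaner route is induction via a normal-subgroup-free argument. Choose $h$ to maximize the merging, and use that the number of pairs is at least $\sum_i (r_i-1)$. Alternatively, follow Neumann--Vaughan-Lee exactly. Pick a point $\alpha_i\in\Delta_i$ and note that the subgroups $H_j$ together with stabilizers of $H_j$-orbits give a chain in which each new generator at least doubles the size of some orbits. Concretely, we use the potential $\Phi(K)=\sum_{\text{orbits }O\text{ of }K}\log|O|$, bounded by $\sum_i\log n_i$. Each new generator, chosen to move a smallest non-$H$-orbit of $H_j$ off itself, joins it to another orbit and at least doubles the size of every affected minimal orbit.

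Turning this into the bound $\log(n-k+1)$ requires the concavity estimate
\[
\sum_i\log n_i\leq\log\Bigl(\sum_i(n_i-1)+1\Bigr)
\]
for merging counts, which is the arithmetic I expect to be fiddly. I would first attempt the halving of $d$ via choosing $h$ to move every $H_j$-orbit off itself, which is possible by a Jordan-type counting lemma: a transitive group on a set partitioned into $r\geq2$ parts has an element mapping no part into itself-intersecting. Verifying that lemma is where I expect the real work.
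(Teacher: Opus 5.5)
Your framework is right: a greedy chain starting from $H_0=1$, stopped when its orbits coincide with those of $H$, with orbit-minimality then forcing equality. Your remark that the orbits of $\langle H_j,h\rangle$ are the connected components of the graph on the $H_j$-orbits is also correct. However, the halving step, which is the whole content of the theorem, is never proved, and the routes you sketch do not work.

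First, you aim at the wrong condition. A component is a singleton $\{O\}$ exactly when $O^h=O$: if $O^h\neq O$, then $O^h$, having the same size as $O$, meets another $H_j$-orbit. So what you need is that $h$ stabilizes few $H_j$-orbits setwise, not that $O^h\cap O=\emptyset$. Your proposed ``Jordan-type lemma'' with disjointness is false as stated; for example, take $S_3$ and the partition $\{1,2\}\mid\{3\}$, and more generally it fails whenever a part exceeds half of its orbit. Moreover, an $H_j$-orbit that is already a whole $H$-orbit is stabilized by every $h$, so you cannot demand that all orbits move.

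Second, the potential-function route rests on $\sum_i\log n_i\le\log(\sum_i(n_i-1)+1)$, which is false. In fact $\prod_i n_i\geq 1+\sum_i(n_i-1)$, so the inequality goes the other way; for example, $n_1=n_2=2$ gives $2>\log 3$.

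The missing idea is a single global averaging over $H$, which also removes your ``main obstacle'' of controlling all $H$-orbits with one element. If $\Delta$ is an $H_j$-orbit inside the $H$-orbit $\Omega$, the $H$-translates of $\Delta$ cover $\Omega$. Hence the setwise stabilizer $Y_\Delta$ of $\Delta$ in $H$ satisfies $|Y_\Delta|/|H|\le|\Delta|/|\Omega|$. Summing over all $H_j$-orbits, the average over $h\in H$ of the number $\ell(h)$ of $H_j$-orbits stabilized by $h$ is at most $k$.

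If $H_j$ has $t>k$ orbits, then $\ell(1)=t>k$, so some $h$ has $\ell(h)\le k-1$. Let $\langle H_j,h\rangle$ have $t'$ orbits. At most $k-1$ of them are single $H_j$-orbits, and each of the others contains at least two. Therefore $t\geq 2t'-(k-1)$, that is, $t'-k+1\le (t-k+1)/2$.

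Note that $t-k+1$, not your $d=t-k$, is the quantity to halve. Halving $d$ only yields $\lfloor\log(n-k)\rfloor+1$ generators, which already exceeds $\log(n-k+1)$ when $n=3$ and $k=1$. Iterating from $t_0=n$ gives $t_s=k$ for some $s\le\lfloor\log(n-k+1)\rfloor$, and orbit-minimality then forces $H_s=H$.
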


\begin{proof}
We imitate the proof of~\cite[Lemma~3.1]{NV1977}.
Let $X$ be an orbit-minimal subgroup of $S_n$ whose orbits are $\Omega_1,\ldots,\Omega_k$. Put $x_0=1$, and suppose that $x_0,x_1,\ldots,x_s$ have been selected for some nonnegative integer $s$. Let $X_s=\langle x_0,x_1,\ldots,x_s\rangle$, and let $\Delta_1,\ldots,\Delta_{t_s}$ be the orbits of $X_s$.

For each $i\in\{1,\ldots,t_s\}$, let $j_i$ be the unique element of $\{1,\ldots,k\}$ such that $\Delta_i\subseteq\Omega_{j_i}$, and let $Y_i$ be the stabilizer of $\Delta_i$ in $X$. The index $m_i$ of $Y_i$ in $X$ is the number of distinct $X$-translates of $\Delta_i$. These translates cover $\Omega_{j_i}$, as $X$ is transitive on $\Omega_{j_i}$, and so $m_i|\Delta_i|\geq|\Omega_{j_i}|$. Consequently,
\[
\sum_{i=1}^{t_s}\frac{1}{m_i}
\leq\sum_{i=1}^{t_s}\frac{|\Delta_i|}{|\Omega_{j_i}|}
=\sum_{j=1}^k\frac{1}{|\Omega_j|}\sum_{\substack{i=1\\j_i=j}}^{t_s}|\Delta_i|
\leq\sum_{j=1}^k\frac{1}{|\Omega_j|}\cdot|\Omega_j|
=k,
\]
and it follows that
\begin{equation}\label{Eqn:7}
\sum_{i=1}^{t_s}|Y_i|=\sum_{i=1}^{t_s}\frac{|X|}{m_i}\leq k|X|.
\end{equation}

For $x\in X$, let $\ell(x)=|\{i\in\{1,\ldots,t_s\}\mid x\in Y_i\}|$ be the number of $X_s$-orbits that are stabilized by $x$. Double counting $(x,i)$ with $x\in Y_i$, and then applying~\eqref{Eqn:7}, we obtain
\[
\sum_{x\in X}\ell(x)=\sum_{i=1}^{t_s}|Y_i|\leq k|X|.
\]
Thus the average value of $\ell(x)$ for all $x\in X$ is at most $k$.
Suppose that $t_s>k$. Then since $\ell(1)=t_s>k$, there exists $x_{s+1}\in X$ such that $\ell(x_{s+1})\leq k-1$. Let
\[
X_{s+1}=\langle X_s,x_{s+1}\rangle.
\]
Each $X_{s+1}$-orbit is a union of $X_s$-orbits. Moreover, an $X_{s+1}$-orbit consists of a single $X_s$-orbit $\Delta_i$ if and only if $x_{s+1}\in Y_i$. Therefore, at most $k-1$ orbits of $X_{s+1}$ consist of a single $X_s$-orbit. Since every other $X_{s+1}$-orbit contains at least two $X_s$-orbits, it follows that
\[
t_s\geq t_{s+1}+\big(t_{s+1}-(k-1)\big),
\]
and hence
\begin{equation}\label{Eqn:8}
t_{s+1}-k+1\leq\frac{t_s-k+1}{2}.
\end{equation}

Initially, $X_0=1$ has $t_0=n$ orbits. Repeated application of~\eqref{Eqn:8} shows that
\[
t_s-k+1\leq\frac{n-k+1}{2^s}
\]
for each $s$ such that $t_{s-1}>k$. Let $d=\lfloor\log(n-k+1)\rfloor$. If $t_d>k$, then
\[
1\leq t_{d+1}-k+1\leq\frac{n-k+1}{2^{d+1}},
\]
a contradiction. Thus the number $t_d$ of orbits of $X_d$ is $k$. Since $X$ is orbit-minimal, this forces
\[
X=X_d=\langle x_0,x_1,\ldots,x_d\rangle,
\]
and so $X$ can be generated by $d$ elements, as $x_0=1$.
\end{proof}

We now give the upper-bound argument communicated to us by Pyber, keeping track of the constants. It yields the following result, which immediately implies Theorem~\ref{Thm:UpperBound}.

\begin{theorem}\label{Thm:UpperBoundDetail}
For positive integers $n$, the number of minimally transitive subgroups of $S_n$ is at most $2^{(3+\log(3)/3+o(1))n\log(n)}$.
\end{theorem}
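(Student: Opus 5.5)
Each minimally transitive subgroup $G\leq S_n$ is determined by a small amount of data, and we count that data. By Lemma~\ref{Lem:AGplus}, $G=\langle H,g\rangle$ for some orbit-minimal solvable subgroup $H\leq G$ and some $g\in S_n$. So it suffices to count pairs $(H,g)$, where $H$ ranges over orbit-minimal solvable subgroups of $S_n$ and $g\in S_n$. There are at most $n!=2^{(1+o(1))n\log(n)}$ choices for $g$. The remaining task is to bound the number of orbit-minimal solvable subgroups of $S_n$ by $2^{(2+\log(3)/3+o(1))n\log(n)}$.

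To count the possible $H$, we use that every solvable subgroup lies in a maximal solvable subgroup. By Lemma~\ref{Lem:Pyber1993}, there are at most $2^{17n}$ conjugacy classes of maximal solvable subgroups, and each class has at most $n!$ members. Hence there are at most $2^{17n}\cdot n!=2^{(1+o(1))n\log(n)}$ maximal solvable subgroups $M$ of $S_n$. Fix one such $M$. By Theorem~\ref{Lem:Dixon1967}, $|M|\leq 24^{(n-1)/3}$. By Theorem~\ref{Lem:Lovasz}, any orbit-minimal $H\leq M$ is generated by at most $\log(n)$ elements of $M$. So the number of such $H$ in $M$ is at most
\[
|M|^{\log(n)}\leq 24^{n\log(n)/3}=2^{(\log(24)/3)n\log(n)}=2^{(1+\log(3)/3)n\log(n)}.
\]
We count generating tuples of length exactly $\lfloor\log(n)\rfloor$, padding with the identity where needed.

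Multiplying the three factors gives
\[
2^{(1+o(1))n\log(n)}\cdot 2^{(1+\log(3)/3)n\log(n)}\cdot 2^{(1+o(1))n\log(n)}=2^{(3+\log(3)/3+o(1))n\log(n)},
\]
which is exactly the claimed exponent. Stirling's formula shows that $\log(n!)=(1+o(1))n\log(n)$, and $17n=o(n\log(n))$.

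No step is a serious obstacle, since everything reduces to the cited results. The only point that needs care is getting the constants right. Counting maximal solvable subgroups through conjugacy classes costs one factor of $n!$. Dixon's bound combined with Lov\'asz's generator bound gives the $1+\log(3)/3$ term. The element $g$ costs another $n!$. I would also check that Lemma~\ref{Lem:AGplus} really yields an $H$ that is orbit-minimal as a subgroup of $S_n$, so that Theorem~\ref{Lem:Lovasz} applies with $k\geq1$, giving $\log(n-k+1)\leq\log(n)$.
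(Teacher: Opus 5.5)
Your proposal is correct and follows the paper's proof essentially step for step. You count pairs $(H,g)$ from Lemma~\ref{Lem:AGplus}, with $g$ costing a factor $n!$, the maximal solvable overgroups $M$ costing $n!\,2^{17n}$ via Lemma~\ref{Lem:Pyber1993}, and the subgroups $H\leq M$ costing $|M|^{\log(n)}\leq 24^{(n-1)\log(n)/3}$ via Theorems~\ref{Lem:Dixon1967} and~\ref{Lem:Lovasz}, exactly as the paper does. The point you flagged needs no extra check: orbit-minimality is an intrinsic property of the permutation group $H$ on $\{1,\ldots,n\}$, so Theorem~\ref{Lem:Lovasz} applies directly.
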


\begin{proof}
We follow the approach in the proof of~\cite[Theorem~4.2]{Pyber1991}. By Lemma~\ref{Lem:Pyber1993}, the number of conjugacy classes of maximal solvable subgroups of $S_n$ is at most $2^{17n}$, implying that there are at most
\[
n!2^{17n}
\]
such subgroups of $S_n$ in total. By Theorem~\ref{Lem:Dixon1967}, any such maximal solvable subgroup $M$ of $S_n$ has order at most $24^{(n-1)/3}$. Moreover, Theorem~\ref{Lem:Lovasz} implies that any orbit-minimal subgroup $H$ of $S_n$ can be generated by $\log(n)$ elements. Thus, for a fixed $M$, the number of possibilities for $H$ is at most $|M|^{\log(n)}\leq 24^{(n-1)\log(n)/3}$.
Consequently, there are at most
\[
n!2^{17n}\cdot 24^{(n-1)\log(n)/3}
\]
orbit-minimal solvable subgroups of $S_n$. Hence we conclude by Lemma~\ref{Lem:AGplus} and Stirling's formula that the number of minimally transitive subgroups of $S_n$ is at most
\[
n!2^{17n}\cdot 24^{(n-1)\log(n)/3}\cdot n!=2^{(3+\log(3)/3+o(1))n\log(n)}.\qedhere
\]
\end{proof}

\section{Proof of Theorem~\ref{Thm:LowerBound}}\label{Sec:LowerBound}

\begin{notation}\label{Ntn:1}
Let $p$ be a prime, let $m\geq2$ be an integer, let $V$ be a vector space over $\bbF_p$ with a basis $e_1,\ldots,e_m$, and for each $i\in\{1,\ldots,m\}$, let
\[
t_i\colon\ V\to V,\ \ v\mapsto v+e_i
\]
be the translation by $e_i$ on $V$. With
\[
T\coloneqq\langle t_1,\ldots,t_m\rangle,
\]
consider the imprimitive wreath product $\bbF_p^+\wr T=B\rtimes T$ in its action on $V\times\bbF_p$, where the action of $\bbF_p^+$ on $\bbF_p$ is by addition, and
\[
B\coloneqq\mathrm{Fun}(V,\bbF_p^+)\cong(\bbF_p^+)^{p^m}
\]
is the base group with each $b\in B$ acting on $V\times\bbF_p$ by
\[
(v,a)\mapsto(v,a+b(v)).
\]
Then $B\rtimes T$ is an imprimitive transitive permutation group on $V\times\bbF_p$ with a block system
\[
\mathcal{P}\coloneqq\{\{v\}\times\bbF_p\mid v\in V\}.
\]
Let $b=(b_1,\ldots,b_m)\in B^m$, and for each $i\in\{1,\ldots,m\}$, let $g_i=b_it_i\in B\rtimes T$, namely
\[
g_i\colon\ V\times\bbF_p\to V\times\bbF_p,\ \ (v,a)\mapsto(v+e_i,a+b_i(v)).
\]
Let $G=G(b)=\langle g_1,\ldots,g_m\rangle$ and $K=K(b)=G(b)\cap B$.
\end{notation}

Under Notation~\ref{Ntn:1}, the natural projection $B\rtimes T\to T$ restricts to an epimorphism $G\to T$ with kernel $K$, and so the induced permutation group $G/K$ of $G$ on $\mathcal{P}$ is $T$.

\begin{lemma}\label{Lem:K}
Under Notation~$\ref{Ntn:1}$, $K=\Phi(G)$ is the normal closure of
\begin{equation}\label{Eqn:4}
\{g_i^p\mid1\leq i\leq m\}\cup\{[g_i,g_j]\mid1\leq i<j\leq m\}
\end{equation}
in $G$. Moreover, the following are equivalent:
\begin{enumerate}[\rm(a)]
\item\label{Lem:K:a} $K$ is nontrivial;
\item\label{Lem:K:b} $\mathcal{P}$ is the set of orbits of $K$ on $V\times\bbF_p$;
\item\label{Lem:K:c} $G$ is transitive on $V\times\bbF_p$;
\item\label{Lem:K:d} $G$ is minimally transitive on $V\times\bbF_p$.
\end{enumerate}
\end{lemma}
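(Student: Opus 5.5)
We first identify $K$ with $\Phi(G)$ and with the normal closure $N$ of the set~\eqref{Eqn:4}. The group $G$ is a $p$-group, since $B\rtimes T$ is, and it is generated by $g_1,\ldots,g_m$. Hence $\Phi(G)=G^p[G,G]$, which is the normal closure of~\eqref{Eqn:4} together with the Frattini subgroup of the normal closure. More simply, $G/N$ is generated by $m$ commuting images of order dividing $p$. So $G/N$ is elementary abelian of order at most $p^m$, and therefore $\Phi(G)\le N$.

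Conversely, every element of~\eqref{Eqn:4} maps to $1$ in $T$, because $T$ is elementary abelian. Thus $N\le K$. Since $G/K\cong T\cong\bbF_p^m$ is elementary abelian, we also have $\Phi(G)\le K$. Finally, $|G/N|\le p^m=|G/K|$ and $N\le K$ together give $N=K$. As $G/K$ is elementary abelian, $\Phi(G)\le K=N$, and combined with $\Phi(G)\le N\le K$ we need $K\le\Phi(G)$. This holds because $N$ is generated by $p$th powers and commutators, which lie in $G^p[G,G]=\Phi(G)$. So $K=\Phi(G)=N$.

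Next we prove the cycle of equivalences (b)$\Rightarrow$(c)$\Rightarrow$(d)$\Rightarrow$(a)$\Rightarrow$(b).

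\emph{(b)$\Rightarrow$(c).} $G$ acts transitively on $\mathcal{P}$ through $T$, and $K$ is transitive on each block. Hence $G$ is transitive.

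\emph{(c)$\Rightarrow$(d).} Use Lemma~\ref{Lem:Frattini}. A point stabilizer $H=G_{(v,a)}$ fixes the block $\{v\}\times\bbF_p$. Since $T$ is regular on $\mathcal{P}$, $H$ lies in the kernel $K$ of the action on $\mathcal{P}$, so $H\le K=\Phi(G)$ and $G$ is minimally transitive.

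\emph{(d)$\Rightarrow$(a).} If $K=1$, then $|G|=|T|=p^m<p^{m+1}$, so $G$ cannot be transitive on $p^{m+1}$ points.

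\emph{(a)$\Rightarrow$(b).} This is the main step. Take $1\ne k\in K$, so $k(v_0)\ne0$ for some $v_0$. Then $k$ acts on the block at $v_0$ as a nontrivial translation of $\bbF_p$, which is transitive because $p$ is prime. Every $K$-orbit lies in a block, since $K\le B$. Because $K\trianglelefteq G$ and $G$ permutes the blocks transitively, conjugating $k$ by an element $g\in G$ that maps block $v_0$ to block $v$ gives $k^g\in K$ acting nontrivially on block $v$. Concretely, $k^g(v)=k(v-u)$, where $u$ is the $T$-image of $g$. Hence $K$ is transitive on each block, and the $K$-orbits are exactly $\mathcal{P}$.

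The only delicate point is the bookkeeping in the first part, namely making sure that $\Phi(G)\le N\le K$ and $K\le\Phi(G)$ are each justified; the rest is routine.
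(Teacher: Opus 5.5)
Your proof is correct and is essentially the paper's argument. The one variation is that you get $N=K$ from $N\le K$ together with $|G/N|\le p^m=|G/K|$, where the paper pulls back the presentation of $T$ through a free group; this is an equivalent and slightly more elementary step. You also arrange the equivalences as a single cycle, with the order count placed at (d)$\Rightarrow$(a) rather than (c)$\Rightarrow$(a). The garbled clause about ``the Frattini subgroup of the normal closure'' in your opening paragraph is never used and should simply be deleted.
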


\begin{proof}
Let $\rho\colon G\to T$ be the restriction of the natural projection $B\rtimes T\to T$ to $G$. Then $\rho(g_i)=t_i$ for all $i\in\{1,\ldots,m\}$. Let $F$ be the free group on \(x_1,\ldots,x_m\).
By the universal property of $F$, the assignments $x_i\mapsto g_i$ and $x_i\mapsto t_i$ uniquely extend to homomorphisms $\varphi\colon F\to G$ and $\psi\colon F\to T$, respectively. Since $\rho\circ\varphi$ and $\psi$ agree on the free basis $x_1,\ldots,x_m$, they are equal. Since $T$ is an elementary abelian $p$-group, it has presentation
\[
\langle t_1,\ldots,t_m\mid t_i^p=1\text{ for }1\leq i\leq m,\ [t_i,t_j]=1\text{ for }1\leq i<j\leq m\rangle,
\]
and so $\mathrm{Ker}(\psi)$ is the normal closure of
\[
\{x_i^p\mid1\leq i\leq m\}\cup\{[x_i,x_j]\mid1\leq i<j\leq m\}
\]
in $F$. Note that $\varphi$ is surjective as \(G=\langle g_1,\ldots,g_m\rangle\). It follows that $K=\mathrm{Ker}(\rho)=\varphi(\mathrm{Ker}(\psi))$ is the normal closure of~\eqref{Eqn:4} in $G$. Since $G$ is a $p$-group, $\Phi(G)=G^pG'$. Thus $g_i^p\in\Phi(G)$ for all $1\leq i\leq m$, and $[g_i,g_j]\in\Phi(G)$ for all $1\leq i<j\leq m$. Consequently, $K\leq\Phi(G)$. Moreover, since $G/K\cong T$ is elementary abelian, $\Phi(G)\leq K$. This leads to $K=\Phi(G)$.

Next, we show~\eqref{Lem:K:a}$\Leftrightarrow$\eqref{Lem:K:b}$\Leftrightarrow$\eqref{Lem:K:c}. Note that the overgroup $B$ of $K$ has $\mathcal{P}$ as the set of orbits on $V\times\bbF_p$. If $K$ is nontrivial as~\eqref{Lem:K:a} asserts, then $K$ has an orbit $P$ on $V\times\bbF_p$ for some $P\in\mathcal{P}$. In this case, since $K$ is normal in $G$ while $G$ is transitive on $\mathcal{P}$, it follows that $\mathcal{P}$ is the set of orbits of $K$ on $V\times\bbF_p$. Thus,~\eqref{Lem:K:a}$\Rightarrow$\eqref{Lem:K:b}. The implication~\eqref{Lem:K:b}$\Rightarrow$\eqref{Lem:K:c} follows from the transitivity of $G$ on $\mathcal{P}$. If $G$ is transitive on $V\times\bbF_p$ as~\eqref{Lem:K:c} states, then $|K||T|=|G|\geq|V\times\bbF_p|=p|T|$, and so $K$ is nontrivial. Therefore,~\eqref{Lem:K:a}$\Leftrightarrow$\eqref{Lem:K:b}$\Leftrightarrow$\eqref{Lem:K:c} is valid.

Finally, let $H$ be the stabilizer in \(G\) of \((0,0)\in V\times\bbF_p\). Then $H\leq K$ as $T$ acts regularly on $\mathcal{P}$. Hence we derive from $K\leq\Phi(G)$ that $H\leq\Phi(G)$. This together with Lemma~\ref{Lem:Frattini} shows~\eqref{Lem:K:c}$\Leftrightarrow$~\eqref{Lem:K:d}.
\end{proof}

For the remainder of the proof, we adopt a standard viewpoint from modular representation theory, interpreting the action of $T$ on the base group $B$ as multiplication in a graded algebra. The following notation makes this precise.

\begin{notation}\label{Ntn:2}
Adopt Notation~$\ref{Ntn:1}$.
With $T$ acting on $B$ by conjugation, the elementary abelian $p$-group $B$ can be viewed as an $\bbF_p[T]$-module.
Moreover, $\bbF_p[T]$ itself is an $\bbF_p[T]$-module with the multiplication action of $T$.
For $u\in V$, let $\tau_u\in T$ be the translation $v\mapsto v+u$ on $V$. Define $\theta\colon \bbF_p[T]\to B$ by letting
\[
\theta\left(\sum_{u\in V} a_u\tau_u\right)(v)=a_v.
\]
Then $\theta$ is an isomorphism of $\bbF_p[T]$-modules. Let
\[
R=\bbF_p[X_1,\ldots,X_m]/(X_1^p,\ldots,X_m^p).
\]
Note that $1+X_i$ is invertible in $R$ for each $i\in\{1,\ldots,m\}$, as $(1+X_i)^p=1+X_i^p=1$.
We make $R$ an $\bbF_p[T]$-module by letting $t_i$ act as multiplication by $(1+X_i)^{-1}$ for $i\in\{1,\ldots,m\}$. The assignment
\[
X_i\mapsto t_i^{-1}-1
\]
extends to an algebra isomorphism $\eta\colon R\to \bbF_p[T]$. With the above action of $T$ on $R$, this is also an isomorphism of $\bbF_p[T]$-modules. Through the composite isomorphism
\[
R\xrightarrow{\eta}\bbF_p[T]\xrightarrow{\theta}B,
\]
we may identify $B$ with $R$ as an $\bbF_p[T]$-module. For $d\in\{0,1,\ldots,(p-1)m\}$, let $R_d$ be the $\bbF_p$-subspace of $R$ spanned by the monomials in $X_1,\ldots,X_m$ of degree $d$. Finally, define an $\bbF_p$-linear transformation
\[
\Psi\colon\ R^m\to R^{m(m+1)/2},\ \
b\mapsto\Big(\big(X_i^{p-1}b_i\big)_{1\leq i\leq m},\big(X_jb_i-X_ib_j\big)_{1\leq i<j\leq m}\Big).
\]
\end{notation}

\begin{lemma}\label{Lem:RelationMap}
Under Notation~$\ref{Ntn:2}$, $K(b)$ is the ideal of $R$ generated by the coordinates of $\Psi(b)$ for each $b\in R^m$.
\end{lemma}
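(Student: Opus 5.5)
Under the identification $B\cong R$, $K(b)$ is the normal closure in $G$ of the set~\eqref{Eqn:4} by Lemma~\ref{Lem:K}. The plan is to compute each element of~\eqref{Eqn:4} in $R$ and then identify the normal closure with an ideal.

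\textbf{Normal closure is an ideal.} Since $K\leq B$ and $B$ is abelian, conjugation by $g_i=b_it_i$ acts on $K$ exactly as conjugation by $t_i$ does. Hence the normal closure in $G$ of a subset $S\subseteq B$ equals the $\bbF_p[T]$-submodule of $B$ generated by $S$. Here $\bbF_p$-closure is automatic, since $B$ is elementary abelian and subgroups are $\bbF_p$-subspaces. Under $\eta$, $\bbF_p[T]$ acts on $R$ through the algebra $R$ itself, since $t_i$ acts as multiplication by $(1+X_i)^{-1}$ and these elements generate $R$ as an algebra. Indeed $X_i=(1+X_i)-1$, and $(1+X_i)=((1+X_i)^{-1})^{p-1}$. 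So $\bbF_p[T]$-submodules of $R$ are exactly ideals, and $K(b)$ is the ideal generated by the images of the elements in~\eqref{Eqn:4}. It therefore remains to show that these images are, up to multiplication by units of $R$, the coordinates of $\Psi(b)$.

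\textbf{Computing powers.} Write $t_i$ for its action on $B\cong R$, namely $b\mapsto b^{t_i}=(1+X_i)^{-1}b$ (or its inverse, depending on the convention; I will fix the convention via $\theta$ and check it). Then
\[
g_i^p=(b_it_i)^p=b_i\,b_i^{t_i^{-1}}\cdots b_i^{t_i^{-(p-1)}}\,t_i^p .
\]
In additive notation this is $\bigl(\sum_{k=0}^{p-1}(1+X_i)^{\pm k}\bigr)b_i$. Now
\[
\sum_{k=0}^{p-1}Y^k=\frac{Y^p-1}{Y-1}=(Y-1)^{p-1}
\]
in characteristic $p$. With $Y=(1+X_i)^{\pm1}$, we have $Y-1$ equal to a unit times $X_i$, so the element is a unit times $X_i^{p-1}b_i$.

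\textbf{Computing commutators.} Write
\[
[g_i,g_j]=t_i^{-1}b_i^{-1}t_j^{-1}b_j^{-1}b_it_ib_jt_j .
\]
Moving all $t$'s to the right and using $[t_i,t_j]=1$, additively this becomes
\[
(\text{unit})\cdot\bigl((Y_j-1)b_i-(Y_i-1)b_j\bigr),\qquad Y_k=(1+X_k)^{\pm1}.
\]
For $Y_k=(1+X_k)^{-1}$ we get $Y_k-1=-X_k(1+X_k)^{-1}$. This introduces \emph{different} units $(1+X_i)^{-1}$ and $(1+X_j)^{-1}$ on the two terms, so the element is
\[
-(1+X_i)^{-1}(1+X_j)^{-1}\bigl(X_j(1+X_i)b_i-X_i(1+X_j)b_j\bigr)
=-u\bigl(X_jb_i-X_ib_j+X_iX_j(b_i-b_j)\bigr),
\]
where $u=(1+X_i)^{-1}(1+X_j)^{-1}$ is a unit. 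This does not directly match $X_jb_i-X_ib_j$, and handling it is the main obstacle. The fix is to choose the convention carefully: if $t_i$ acts by $(1+X_i)^{-1}$ on the side where the commutator yields $(t^{-1}-1)$, then $Y_k-1=X_k$ exactly and the commutator is precisely $\pm(X_jb_i-X_ib_j)$, up to an overall unit from conjugating by $t_it_j$. The paper's choice of $(1+X_i)^{-1}$ and $X_i\mapsto t_i^{-1}-1$ is presumably designed for this. So the step I would do carefully is writing $[g_i,g_j]=\bigl(b_i^{-1}\bigr)^{t_i}\cdot\ldots$ explicitly and verifying that each factor lands on $X_k=t_k^{-1}-1$ applied to $b$. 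Any residual conjugating element of $T$ only contributes a unit, which does not change the generated ideal. Combining the three steps gives the lemma.
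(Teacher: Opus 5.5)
Your first two steps are sound and match the paper. The normal closure of~\eqref{Eqn:4} is the $\bbF_p[T]$-submodule of $B$ it generates, hence an ideal of $R$. Also, $g_i^p$ is a unit multiple of $X_i^{p-1}b_i$; the paper even gets equality, since $\sum_j(1+X_i)^{-j}=\sum_j(1+X_i)^j$.

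The gap is in the commutator, which is the only substantive computation in the lemma. You assert the form $(\text{unit})\cdot\bigl((Y_j-1)b_i-(Y_i-1)b_j\bigr)$ without derivation and with the exponent $\pm1$ left open. You then evaluate it at $Y_k=(1+X_k)^{-1}$, the operator by which $t_k$ acts. But moving the $t$'s to the right actually gives
\[
[g_i,g_j]=-b_i^{t_i}-b_j^{t_it_j}+b_i^{t_it_j}+b_j^{t_j}=Y_i(Y_j-1)b_i-Y_j(Y_i-1)b_j .
\]
So the $b_i$-term carries an extra factor $Y_i$ and the $b_j$-term an extra factor $Y_j$; this is not an overall unit. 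These prefactors are exactly what cancel the mismatched units you found. Since $Y_k-1=-X_kY_k$, the expression equals $Y_iY_j(X_ib_j-X_jb_i)$, a single unit times the corresponding coordinate of $\Psi(b)$. The extra term $X_iX_j(b_i-b_j)$, and with it your ``main obstacle'', comes from dropping these prefactors.

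Your proposed repair does not close the gap. The convention is not at your disposal, since Notation~\ref{Ntn:2} fixes it; what is needed is a correct computation under that convention. Your remark that one should see $t_k^{-1}-1=X_k$ after extracting an overall unit coming from $t_it_j$ is in fact the right mechanism. Concretely, $Y_i(Y_j-1)=Y_iY_j(1-Y_j^{-1})=-Y_iY_jX_j$, and likewise for the other term. But you leave this as something that ``presumably'' works, and it contradicts the computation you actually display. As written, the proposal does not prove that the commutators generate the same ideal as the elements $X_jb_i-X_ib_j$. The short calculation above, which is what the paper does, is the missing step.
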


\begin{proof}
By Lemma~\ref{Lem:K}, $K=K(b)$ is the normal closure in $G$ of the set~\eqref{Eqn:4}. Since $B$ is abelian, conjugation by $g_i=b_it_i$ on $B$ is the same as conjugation by $t_i$ for each $i\in\{1,\ldots,m\}$. Accordingly, $K$ is the $\bbF_p[T]$-submodule of $B$ generated by~\eqref{Eqn:4}. Recall from Notation~\ref{Ntn:2} that, under the identification of $B$ with $R$, the action of $t_i$ is multiplication by $(1+X_i)^{-1}$. Therefore, $K$ is the ideal of $R$ generated by~\eqref{Eqn:4}. Moreover, for $1\leq i\leq m$,
\begin{align*}
g_i^p&=(b_it_i)^p\\
&=b_i(t_ib_it_i^{-1})(t_i^2b_it_i^{-2})\cdots(t_i^{p-1}b_it_i^{1-p})\\
&=b_i+b_i^{t_i^{p-1}}+b_i^{t_i^{p-2}}+\cdots+b_i^{t_i}\\
&=\Bigg(\sum_{j=0}^{p-1}(1+X_i)^{-j}\Bigg)b_i\\
&=\Bigg(\sum_{j=0}^{p-1}(1+X_i)^j\Bigg)b_i\qquad\mbox{as $(1+X_i)^p=1$ in $R$}\\
&=X_i^{p-1}b_i\qquad\mbox{as $\sum_{j=0}^{p-1}(1+X_i)^j=X_i^{p-1}$ in $\bbF_p[X_i]$},
\end{align*}
and for $1\leq i<j\leq m$,
\begin{align*}
[g_i,g_j]&=g_i^{-1}g_j^{-1}g_ig_j\\
&=\big(b_i^{t_i}\big)^{-1}t_i^{-1}\big(b_j^{t_j}\big)^{-1}t_j^{-1}(b_it_i)(b_jt_j)\\
&=-b_i^{t_i}-b_j^{t_it_j}+b_i^{t_it_j}+b_j^{t_j}\\
&=-(1+X_i)^{-1}b_i-(1+X_i)^{-1}(1+X_j)^{-1}b_j+(1+X_i)^{-1}(1+X_j)^{-1}b_i+(1+X_j)^{-1}b_j\\
&=(1+X_i)^{-1}(1+X_j)^{-1}(X_ib_j-X_jb_i).
\end{align*}
Hence $K(b)$ is the ideal generated by the coordinates of $\Psi(b)$.
\end{proof}

\begin{lemma}\label{Lem:KernelPsi}
Under Notation~$\ref{Ntn:2}$, $|\mathrm{Ker}(\Psi)|=p^{p^m-1}$.
\end{lemma}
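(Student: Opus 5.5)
The plan is to identify $\mathrm{Ker}(\Psi)$ with the set of complements of $B$ in $B\rtimes T$, and then count those complements by cohomology. Since $B\cong\bbF_p[T]$ is a free module, $H^1(T,B)=0$, and the count reduces to computing $|B/B^T|$.

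\emph{Step 1: $b\in\mathrm{Ker}(\Psi)$ if and only if $K(b)=1$.} By Lemma~\ref{Lem:RelationMap}, $K(b)$ is the ideal of $R$ generated by the coordinates of $\Psi(b)$. This ideal is zero exactly when all those coordinates vanish, i.e.\ when $\Psi(b)=0$.

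\emph{Step 2: $K(b)=1$ if and only if $t_i\mapsto g_i$ extends to a homomorphic section $\sigma\colon T\to B\rtimes T$ of the projection.} If $K(b)=G\cap B=1$, then $\rho\colon G\to T$ is an isomorphism, and $\sigma=\rho^{-1}$ sends $t_i$ to $g_i$. Conversely, suppose such a $\sigma$ exists. Its image $\sigma(T)$ contains every $g_i$, so $G\le\sigma(T)$. Since $\sigma(T)\cap B=1$, we get $G\cap B=1$.

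The sections $\sigma$ are in bijection with $1$-cocycles $\delta\in Z^1(T,B)$, via $\sigma(t)=\delta(t)t$. A cocycle is determined by its values on the generators $t_1,\dots,t_m$, and here $\delta(t_i)=b_i$. Hence $b\mapsto\delta$ gives a bijection $\mathrm{Ker}(\Psi)\to Z^1(T,B)$. Injectivity holds because $b$ is read off from $\delta$. Surjectivity holds because every cocycle $\delta$ yields the vector $b=(\delta(t_i))_i$, for which $G(b)\le\sigma(T)$ meets $B$ trivially.

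\emph{Step 3: count $Z^1(T,B)$.} Notation~\ref{Ntn:2} gives $B\cong\bbF_p[T]$ as $\bbF_p[T]$-modules, so $B$ is the regular, hence induced, module. By Shapiro's lemma, $H^1(T,B)\cong H^1(1,\bbF_p)=0$. Therefore $Z^1(T,B)=B^1(T,B)$. The coboundary map $B\to B^1(T,B)$, $c\mapsto(t\mapsto c^{-1}c^{t})$, has kernel the fixed points $B^T$. For the regular module, $B^T$ is spanned by $\sum_{u\in V}\tau_u$, so $|B^T|=p$. It follows that
\[
|\mathrm{Ker}(\Psi)|=|Z^1(T,B)|=|B|/|B^T|=p^{p^m}/p=p^{p^m-1}.
\]

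The main obstacle is the bookkeeping in Step 2: checking that the cocycle and section correspondence matches the conventions $g_i=b_it_i$ and $x^y=y^{-1}xy$. Because we only count elements, any consistent convention suffices.

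As a fallback avoiding cohomology, one can argue concretely inside $R$. Every vector $(X_if)_{1\le i\le m}$ with $f\in R$ lies in $\mathrm{Ker}(\Psi)$, since $X_i^{p-1}X_if=X_i^pf=0$ and $X_jX_if-X_iX_jf=0$. The map $f\mapsto(X_if)_i$ has kernel the socle $\bbF_p(X_1\cdots X_m)^{p-1}$. So this family has exactly $p^{p^m-1}$ members, which gives the lower bound $|\mathrm{Ker}(\Psi)|\ge p^{p^m-1}$. The equality is then the exactness statement that every element of $\mathrm{Ker}(\Psi)$ has this form; this can be proved by induction on $m$, with base case the fact that $X^{p-1}c=0$ in $\bbF_p[X]/(X^p)$ forces $c\in(X)$. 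Step 3 above supplies exactly this exactness without the induction.
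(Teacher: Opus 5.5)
Your argument is correct, and its skeleton is the paper's. $\mathrm{Ker}(\Psi)$ is put in bijection with the complements of $B$ in $B\rtimes T$ (your $Z^1(T,B)$). All complements are shown to be $B$-conjugate to $T$ (your $H^1(T,B)=0$). The count is then $|B:\Cen_B(T)|=p^{p^m}/p$ (your $|B|/|B^T|$).

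The genuine difference is how the middle step is obtained. You quote Shapiro's lemma for the regular module. The paper proves conjugacy of complements by hand from the wreath-product action:
\begin{enumerate}[\rm(i)]
\item a complement $C$ is semiregular on $V\times\bbF_p$, because $T$ is regular on $V$;
\item hence each $C$-orbit has the form $\{(v,a_v)\mid v\in V\}$;
\item conjugating by $f\in B$ with $f(v)=-a_v$ carries that orbit to $V\times\{0\}$;
\item the setwise stabilizer of $V\times\{0\}$ in $B\rtimes T$ is exactly $T$.
\end{enumerate}
That argument is the concrete content of Shapiro's lemma for this (co)induced module, and it keeps the section free of cohomology. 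Yours is shorter if standard cohomology is taken for granted.

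Your fallback family is exactly the paper's family of conjugates of $T$. With $g_i=b_it_i$ one computes $T^f=G\big((X_if)_{1\le i\le m}\big)$ for $f\in B=R$. So your lower bound is the easy half, and the exactness you defer is precisely the conjugacy of complements.

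There is one small slip in the bookkeeping you flagged. With $\sigma(t)=\delta(t)t$, the cocycle rule is $\delta(st)=\delta(s)\,s\delta(t)s^{-1}$, and the coboundaries are $t\mapsto c\,tc^{-1}t^{-1}$, so that $\sigma(T)=cTc^{-1}$. The formula $c^{-1}c^{t}$ belongs to the other convention, $\sigma(t)=t\delta(t)$. Either way the kernel of the coboundary map is $\Cen_B(T)$, so the count is unaffected.
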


\begin{proof}
By Lemma~\ref{Lem:RelationMap}, $\Psi(b)=0$ if and only if $K(b)=0$.
Since the restriction of the natural projection $\rho\colon B\rtimes T\to T$ to $G$ is an epimorphism with kernel $K(b)$, it follows that $\Psi(b)=0$ if and only if $G(b)$ is a complement to $B$ in $B\rtimes T$.
Moreover, every complement of $B$ in $B\rtimes T$ contains a unique element in $\rho^{-1}(t_i)$ for each $i\in\{1,\ldots,m\}$, and these elements have the form $b_it_i$ for a unique $b=(b_1,\ldots,b_m)\in B^m$ and generate the complement.
Hence every complement is $G(b)$ for a unique $b$, and so the elements of $\mathrm{Ker}(\Psi)$ are in bijection with complements of $B$ in $B\rtimes T$.

Let $C$ be a complement of $B$ in $B\rtimes T$. If an element of $C$ fixes some point of $V\times\bbF_p$, then its image in $T$ fixes a point of $V$, and since $T$ acts regularly on $V$, this element belongs to $C\cap B=1$. Hence $C$ is semiregular on $V\times\bbF_p$, and so each orbit of $C$ has size $|C|=p^m$. Since the image of $C$ in $T$ is regular on $V$, it follows that each orbit of $C$ has the form
\[
\{(v,a_v)\mid v\in V\}.
\]
Let $f\in B$ be defined by $f(v)=-a_v$ for $v\in V$. Then $C^f$ stabilizes $V\times\{0\}$. An element of $B\rtimes T$ has the form $tc$ for some $t\in T$ and $c\in B$, and such an element stabilizes $V\times\{0\}$ if and only if $c=0$. Consequently, $C^f=T$. This proves that every complement of $B$ in $B\rtimes T$ is conjugate to $T$ by some element of $B$.

By the above two paragraphs, the elements of $\mathrm{Ker}(\Psi)$ are in bijection with the conjugates of $T$ by elements of $B$. Since $B$ is normalized by $T$ with \(B\cap T=1\), an element of $B$ normalizes $T$ if and only if it centralizes $T$. Moreover, it is straightforward to verify that $\Cen_B(T)$ is the group of constant functions from $V$ to $\bbF_p$. Therefore,
\[
|\mathrm{Ker}(\Psi)|=\frac{|B|}{|\Nor_B(T)|}=\frac{|B|}{|\Cen_B(T)|}=\frac{|B|}{p}=p^{p^m-1},
\]
as required.
\end{proof}

\begin{lemma}\label{Lem:ManyIdeals}
Under Notation~$\ref{Ntn:2}$, let $d_1$ and $d_2$ be integers such that $0\leq d_1\leq d_2\leq(p-1)m$ and $d_2-d_1+p-2\leq(p-1)m$, and let
\[
U=\bigoplus_{d=d_1}^{d_2}R_d
\ \text{ and }\
W=\bigoplus_{d=0}^{d_2-d_1+p-2}R_d.
\]
If $m\dim_{\bbF_p}(U)\geq p^m$, then at least
\[
p^{m\dim_{\bbF_p}(U)-p^m-\binom{m+1}{2}^2\dim_{\bbF_p}(W)}
\]
distinct nonzero ideals of $R$ occur among the ideals $K(b)$ with $b\in U^m$.
\end{lemma}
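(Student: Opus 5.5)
We will count the tuples $b\in U^m$ and bound how many of them can give the same ideal. The map $b\mapsto K(b)$ factors through $\Psi$ by Lemma~\ref{Lem:RelationMap}: $K(b)$ is the ideal generated by the coordinates of $\Psi(b)$. The number of tuples is $|U^m|=p^{m\dim(U)}$.

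\textbf{Step 1: the image $\Psi(U^m)$.} Since $\Psi$ is $\bbF_p$-linear, $|\Psi(U^m)|\geq p^{m\dim(U)}/|\mathrm{Ker}(\Psi)|$. By Lemma~\ref{Lem:KernelPsi} this is at least $p^{m\dim(U)-p^m+1}$. Note that $\Psi(b)=0$ exactly when $K(b)=0$, so among these image tuples at most one (the zero tuple) yields the zero ideal. The extra $+1$ in the exponent absorbs that loss, provided $m\dim(U)\geq p^m$ keeps the count at least $p\geq 2$.

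\textbf{Step 2: fibres of the ideal map.} The coordinates of $\Psi(b)$ for $b\in U^m$ are $X_i^{p-1}b_i$ and $X_jb_i-X_ib_j$. These lie in
\[
U'\coloneqq\bigoplus_{d=d_1+1}^{d_2+p-1}R_d .
\]
Fix a nonzero ideal $I$ that arises as $K(b)$. We bound the number of $c\in\Psi(U^m)$, with $c$ a tuple of $N\coloneqq\binom{m+1}{2}$ elements of $I\cap U'$, such that the ideal generated by the coordinates of $c$ is $I$.

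Let $c^0$ be one such tuple. Each coordinate of any other such tuple $c$ lies in $I=(c^0_1,\ldots,c^0_N)$, so it can be written as $\sum_k r_kc^0_k$. Since $c^0_k$ has no components of degree below $d_1+1$, and we only need the components of degree at most $d_2+p-1$, we may truncate each $r_k$ to degrees $0,\ldots,d_2-d_1+p-2$, that is, take $r_k\in W$.

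The key step, which I expect to be the main obstacle, is to justify this truncation. The worry is that a high-degree part of $r_k$ multiplied by a low-degree part of $c^0_k$ might land in the range $d\le d_2+p-1$ and be needed. The fix is to work with the degree filtration. The terms of $r_kc^0_k$ in degrees at most $d_2+p-1$ depend only on the components of $r_k$ in degrees at most $(d_2+p-1)-(d_1+1)$. Since $c$ lies in $U'$, it has no components above degree $d_2+p-1$, so we may discard all higher-degree terms of the product; the equality $c_\ell=\sum_k r_kc^0_k$ then holds on the $U'$-part with the truncated $r_k$.

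Consequently every such $c$ is determined by an $N\times N$ matrix with entries in $W$. Hence each nonzero ideal arises from at most $p^{N^2\dim(W)}$ image tuples. (The hypothesis $d_2-d_1+p-2\le(p-1)m$ only ensures $W$ is well defined.)

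\textbf{Step 3: combining.} The number of distinct nonzero ideals is at least
\[
\frac{p^{m\dim(U)-p^m+1}-1}{p^{\binom{m+1}{2}^2\dim(W)}}\geq p^{m\dim(U)-p^m-\binom{m+1}{2}^2\dim(W)},
\]
using $p^{x+1}-1\geq p^x$ for $x\geq0$. This gives the claimed bound.
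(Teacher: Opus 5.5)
Your proposal is correct and follows essentially the paper's argument. In both, the bound on the possible $\Psi(c)$ comes from the same degree-truncation step, namely that the coefficients can be taken in $W$, which gives the factor $p^{\binom{m+1}{2}^2\dim_{\bbF_p}(W)}$, and Lemma~\ref{Lem:KernelPsi} absorbs the passage from $U^m$ to $\Psi(U^m)$. The only difference is bookkeeping. You first pass to the image $\Psi(U^m)$ and discard the single zero tuple. The paper instead bounds the fibres of $b\mapsto K(b)$ directly by $p^{p^m-1+\binom{m+1}{2}^2\dim_{\bbF_p}(W)}$ and discards the at most $p^{p^m-1}$ tuples $b$ with $K(b)=0$.
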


\begin{proof}
Write the coordinates of $\Psi(b)$ as $\Psi_1(b),\ldots,\Psi_r(b)$ for $b\in U^m$, where $r=\binom{m+1}{2}$.

Fix $b\in U^m$, and suppose that $c\in U^m$ satisfies $K(c)=K(b)$. By Lemma~\ref{Lem:RelationMap}, for each $i\in\{1,\ldots,r\}$, there exist $\lambda_{i,1},\ldots,\lambda_{i,r}\in R$ such that
\begin{equation}\label{Eqn:1}
\Psi_i(c)=\sum_{j=1}^r\lambda_{i,j}\Psi_j(b).
\end{equation}
By the definition of $\Psi$, each coordinate of $\Psi(b)$ and $\Psi(c)$ belongs to
\[
\bigoplus_{d=d_1+1}^{d_2+p-1}R_d.
\]
Accordingly, the part of the right hand side of~\eqref{Eqn:1} of degree at most $d_2+p-1$ depends only on the homogeneous components of the $\lambda_{i,j}$ of degree at most
\[
(d_2+p-1)-(d_1+1)=d_2-d_1+p-2.
\]
Thus, as $c$ varies subject to $K(c)=K(b)$, there are at most
\[
p^{r^2\dim_{\bbF_p}(W)}
\]
possibilities for $\Psi(c)$. For each fixed value of $\Psi(c)$, Lemma~\ref{Lem:KernelPsi} shows that there are at most $p^{p^m-1}$ possibilities for $c$. Consequently, for each fixed $b\in U^m$, there are at most
\begin{equation}\label{Eqn:5}
p^{p^m-1+r^2\dim_{\bbF_p}(W)}
\end{equation}
elements $c\in U^m$ such that $K(c)=K(b)$.

There are $p^{m\dim_{\bbF_p}(U)}$ elements $b$ of $U^m$, and Lemmas~\ref{Lem:RelationMap} and~\ref{Lem:KernelPsi} show that at most $p^{p^m-1}$ of them satisfy $K(b)=0$. If $m\dim_{\bbF_p}(U)\geq p^m$, then at least
\[
p^{m\dim_{\bbF_p}(U)}-p^{p^m-1}\geq p^{m\dim_{\bbF_p}(U)}-p^{m\dim_{\bbF_p}(U)-1}=(p-1)p^{m\dim_{\bbF_p}(U)-1}\geq p^{m\dim_{\bbF_p}(U)-1}
\]
elements $b$ of $U^m$ give a nonzero $K(b)$. Dividing this number by~\eqref{Eqn:5} then gives the conclusion of the lemma.
\end{proof}

\begin{theorem}\label{Thm:LowerBoundDetail}
For each fixed prime number $p$, as $n$ increases as a power of $p$, the number of permutational isomorphism classes of minimally transitive permutation groups of degree $n$ is at least $2^{(1/p-o(1))n\log(n)}$.
\end{theorem}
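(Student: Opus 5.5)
The plan is to take $n=p^{m+1}$, so that $|V\times\bbF_p|=n$ and $\log(n)=(m+1)\log(p)$, and to apply Lemma~\ref{Lem:ManyIdeals} with a well-chosen degree window $[d_1,d_2]$. By Lemma~\ref{Lem:K}, every $b$ with $K(b)\neq0$ gives a minimally transitive group $G(b)$ of degree $n$. Distinct nonzero ideals $K(b)$ give distinct subgroups $G(b)$, because $K(b)=G(b)\cap B$. So Lemma~\ref{Lem:ManyIdeals} produces at least $p^{N}$ distinct minimally transitive subgroups of $\Sym(V\times\bbF_p)$, where
\[
N=m\dim(U)-p^m-\tbinom{m+1}{2}^2\dim(W).
\]
To pass to permutational isomorphism classes, I divide by the size of a conjugacy class, which is at most $n!=2^{(1+o(1))n\log(n)}$. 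That is too crude. So instead I will bound the number of $G(b)$ conjugate to a fixed one more cleverly. Every $G(b)$ contains the normal subgroup $K(b)=\Phi(G(b))$, whose orbits form $\mathcal{P}$. A conjugating permutation between two such groups must therefore map $\mathcal{P}$ to $\mathcal{P}$. It must also carry the induced regular group $T$ to $T$ on blocks, up to the fact that $T$ is regular elementary abelian. Hence the conjugator can be taken in the normalizer of the system. Modulo $B\rtimes T$, it lies in $\Sym(\bbF_p)\wr \Nor_{\Sym(V)}(T)=\Sym(\bbF_p)\wr \mathrm{AGL}(m,p)$.

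Concretely: if $G(b)^\sigma=G(c)$, then $\sigma$ preserves $\mathcal{P}$ (the orbit set of the common Frattini/Kernel structure) and normalizes $T$ modulo the base. So the number of groups in one class among our family is at most $|\Sym(p)\wr\mathrm{AGL}(m,p)|\le (p!)^{p^m}p^{O(m^2)}=2^{O(n/p)}$. Since $n/p\cdot\log(n)$ is the target scale and this quotient costs only $2^{O(n)}$, it is negligible. (If I cannot verify the normalizer claim cleanly, a fallback is to count only $\sigma$ preserving $\mathcal{P}$, giving $(p!)^{p^m}(p^m)!=2^{(1/p+o(1))n\log(n)}$. That would kill the main term, so the refined argument is needed. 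I believe it works because the image of $G(b)$ on $\mathcal{P}$ is exactly $T$ for all $b$.)

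The main quantitative step is to choose $d_1,d_2$ so that $m\dim(U)\approx m p^m$ while $\dim W$ stays tiny. The coefficients of $(1+X+\cdots+X^{p-1})^m$ concentrate around degree $(p-1)m/2$ with standard deviation $\Theta(\sqrt m)$. Take $d_1=\lceil (p-1)m/2-m^{2/3}\rceil$ and $d_2=\lfloor(p-1)m/2+m^{2/3}\rfloor$. By Chebyshev's inequality (Lemma~\ref{Lem:Chebyshev}) applied to a sum of $m$ independent uniform variables on $\{0,\ldots,p-1\}$, we get $\dim U\ge p^m(1-O(m^{-1/3}))$. Meanwhile $W$ consists of degrees at most $2m^{2/3}+p$, so $\dim W\le \binom{m+2m^{2/3}+p}{2m^{2/3}+p}=2^{O(m^{2/3}\log m)}$, and $\binom{m+1}{2}^2\dim W=p^{o(m)}$. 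Hence $N\ge mp^m(1-o(1))$, and the count is $p^{(1-o(1))mp^m}=2^{(1-o(1))(n/p)\log(n)}$, since $mp^m\log p=(n/p)(\log n-\log p)$. The hypotheses $m\dim U\ge p^m$ and $d_2-d_1+p-2\le(p-1)m$ hold for large $m$.

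Dividing by $2^{O(n)}$ gives $2^{(1/p-o(1))n\log(n)}$. I expect the main obstacle to be justifying that the conjugacy classes within the family are only $2^{O(n)}$ in size, i.e., that conjugators can be restricted to a block-preserving group whose action on blocks normalizes $T$.
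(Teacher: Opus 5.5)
Your proposal is correct and follows the paper's proof essentially step for step. It uses the same degree window of half-width $m^{2/3}$ around $(p-1)m/2$, with Chebyshev for $\dim U$ and the binomial bound for $\dim W$ fed into Lemma~\ref{Lem:ManyIdeals}, and the same control of conjugacy: $K(b)^\alpha=\Phi(G(b))^\alpha=\Phi(G(c))=K(c)$ forces $\alpha$ to preserve $\mathcal{P}$ and to induce an element of $\Nor_{\Sym(V)}(T)=T\rtimes\GL(V)$. The only difference is that the paper sharpens the count of family members per class to $(p-1)^{p^m}|\GL(V)|<p^{p^m+m^2}$, using that $K(b)$ induces $\bbF_p^+$ on each block and that $B\rtimes T$ normalizes every $K(b)$; your cruder bound $|\Sym(p)\wr\mathrm{AGL}(m,p)|$ is likewise $2^{O(n)}$ and so equally negligible.
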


\begin{proof}
Adopt Notation~\ref{Ntn:2}, and let
\[
d_1=\left\lceil\frac{(p-1)m}{2}-m^{2/3}\right\rceil
\ \text{ and }\
d_2=\left\lfloor\frac{(p-1)m}{2}+m^{2/3}\right\rfloor.
\]
For sufficiently large $m$, the hypotheses $0\leq d_1\leq d_2\leq(p-1)m$ and $d_2-d_1+p-2\leq(p-1)m$ in Lemma~\ref{Lem:ManyIdeals} are satisfied. Let $h=d_2-d_1+p-2$, and let
\[
U=\bigoplus_{d=d_1}^{d_2}R_d
\ \text{ and }\
W=\bigoplus_{d=0}^{h}R_d.
\]
Note that $h\leq2m^{2/3}+p=O(m^{2/3})$.

We first estimate the dimensions of $U$ and $W$. Let $Y_1,\ldots,Y_m$ be independent random variables, each uniformly distributed on $\{0,1,\ldots,p-1\}$. The monomial basis of $R$ gives
\[
\frac{\dim_{\bbF_p}(U)}{p^m}
=\Pr(d_1\leq Y_1+\cdots+Y_m\leq d_2).
\]
Since $Y_1+\cdots+Y_m$ has mean $(p-1)m/2$ and variance $m(p^2-1)/12$,
\[
1-\frac{\dim_{\bbF_p}(U)}{p^m}
=\Pr\left(\left|Y_1+\cdots+Y_m-\frac{(p-1)m}{2}\right|>m^{2/3}\right)
\leq\frac{m(p^2-1)}{12m^{4/3}}
\]
by Lemma~\ref{Lem:Chebyshev}. Therefore,
\begin{equation}\label{Eqn:2}
\dim_{\bbF_p}(U)=(1-o(1))p^m.
\end{equation}
Since the number of monomials in $X_1,\ldots,X_m$ of degree at most $h$ is at most $\binom{m+h}{h}$,
\[
\dim_{\bbF_p}(W)\leq\binom{m+h}{h}\leq(m+h)^h.
\]
Then we derive from $h=O(m^{2/3})$ that
\[
\log\big(\dim_{\bbF_p}(W)\big)=O(m^{2/3}\log(m))=o(m),
\]
and hence
\begin{equation}\label{Eqn:3}
\binom{m+1}{2}^2\dim_{\bbF_p}(W)=p^{o(m)}=o(p^m).
\end{equation}

Now~\eqref{Eqn:2} implies that $m\dim_{\bbF_p}(U)\geq p^m$ for sufficiently large $m$, and it follows from Lemma~\ref{Lem:ManyIdeals} together with~\eqref{Eqn:2} and~\eqref{Eqn:3} that at least
\[
p^{m\dim_{\bbF_p}(U)-p^m-\binom{m+1}{2}^2\dim_{\bbF_p}(W)}=p^{(1-o(1))mp^m}
\]
distinct nonzero ideals of $R$ occur as $K(b)$ for some $b\in U^m$. For each such choice of $b$, Lemma~\ref{Lem:K} asserts that $G(b)$ is minimally transitive on $V\times\bbF_p$, and that
\[
\Phi(G(b))=K(b)
\]
has $\mathcal P$ as its set of orbits on $V\times\bbF_p$.

Let $b,c\in U^m$ be such that $K(b)$ and $K(c)$ are nonzero, and that $G(b)$ is conjugate to $G(c)$ by some permutation $\alpha$ of $V\times\bbF_p$. Then
\begin{equation}\label{Eqn:6}
K(b)^\alpha=\Phi(G(b))^\alpha=\Phi(G(b)^\alpha)=\Phi(G(c))=K(c).
\end{equation}
In particular, $\alpha$ preserves $\mathcal P$. Let $\overline{\alpha}$ be the permutation of $\mathcal P$ induced by $\alpha$. Since
\[
G(b)^\alpha/K(b)^\alpha=G(c)/K(c),
\]
the permutation $\overline{\alpha}$ normalizes $T$. Identifying $\mathcal P$ with $V$ via $\{v\}\times\bbF_p\mapsto v$, we deduce that
\begin{equation}\label{Eqn:10}
\overline{\alpha}\in\Nor_{\Sym(V)}(T)=T\rtimes\GL(V).
\end{equation}
Since for each $v\in V$, the permutation groups induced by $K(b)$ and by $K(c)$ on $\{v\}\times\bbF_p$ are both $\bbF_p^+$, it follows from $\Nor_{\Sym(\bbF_p)}(\bbF_p^+)=\bbF_p^+\rtimes\GL_1(p)$ together with~\eqref{Eqn:6} and~\eqref{Eqn:10} that
\begin{equation}\label{Eqn:11}
\alpha\in\big(B\rtimes\GL_1(p)^{p^m}\big)\rtimes(T\rtimes\GL(V)).
\end{equation}
Note that $K(b)$ is centralized by $B$ as $K(b)\leq B\cong(\bbF_p^+)^{|V|}$. Moreover, since $K(b)$ is an $\bbF_p[T]$-submodule of $B$, every element of $T$ normalizes $K(b)$. Then~\eqref{Eqn:11} implies that, for each fixed $b$ such that $K(b)$ is nonzero, there are at most
\[
|\GL_1(p)^{p^m}||\GL(V)|=(p-1)^{p^m}|\GL(V)|<p^{p^m}|\GL_m(p)|<p^{p^m+m^2}
\]
distinct subgroups $K(c)$ such that $G(c)$ is permutationally isomorphic to $G(b)$.

Combining the consequences of the above two paragraphs, we conclude that the number of minimally transitive permutation groups of degree $n=p^{m+1}$, counted up to permutational isomorphism, is at least
\[
p^{(1-o(1))mp^m-(p^m+m^2)}=p^{(1-o(1))mp^m}=2^{(1/p-o(1))n\log(n)}.
\]
This completes the proof.
\end{proof}

\section{Concluding remarks}\label{Sec:Remark}

In view of Theorems~\ref{Thm:UpperBound} and~\ref{Thm:LowerBound}, the following two questions arise naturally.

\begin{question}
What is the infimum of positive real numbers $c$ such that the number of minimally transitive subgroups of $S_n$ is at most $2^{(c+o(1))n\log(n)}$?
\end{question}

\begin{question}
What is the infimum of positive real numbers $c$ such that the number of permutational isomorphism classes of minimally transitive permutation groups of degree $n$ is at most $2^{(c+o(1))n\log(n)}$?
\end{question}

\begin{remark}
From Theorems~\ref{Thm:LowerBoundDetail} and~\ref{Thm:UpperBoundDetail} we know that the infima in the above two questions are both between $1/2$ and $3+\log(3)/3\approx3.52832$.
\end{remark}

As mentioned in the Introduction, the results established in Section~\ref{Sec:LowerBound} enable us to construct minimally transitive permutation groups of large order, as recorded in Theorem~\ref{Thm:LargeOrder}.

\begin{proof}[\bf\textup{Proof of Theorem~\ref{Thm:LargeOrder}}]
The theorem holds readily for $n=p^2$ since any regular permutation group of degree $p^2$ is minimally transitive.
Now adopt Notation~\ref{Ntn:2}, let $n=p^{m+1}\geq p^3$, and take $b=(1,0,\ldots,0)\in R^m$.
By Lemma~\ref{Lem:RelationMap}, the coordinates of $\Psi(b)$ generate the ideal
\[
K(b)=(X_1^{p-1},X_2,\ldots,X_m).
\]
It follows that $R/K(b)\cong\bbF_p[X_1]/(X_1^{p-1})$, and so
\[
\dim_{\bbF_p}(K(b))=p^m-(p-1).
\]
In particular, $K(b)$ is nontrivial, and then Lemma~\ref{Lem:K} asserts that $G(b)$ is a minimally transitive permutation group on $V\times\bbF_p$ of degree $p^{m+1}=n$. Moreover, since $G(b)/K(b)\cong T$,
\[
|G(b)|=|K(b)||T|=p^{p^m-(p-1)}\cdot p^m=np^{n/p-p}.
\]
This completes the proof.
\end{proof}

The asymptotic enumeration of vertex-transitive graphs and digraphs naturally gives rise to four problems: one may consider graphs or digraphs, and in either case one may count labelled or unlabelled (up to isomorphism) objects.
For unlabelled graphs, the McKay--Praeger conjecture asserts that asymptotically almost all vertex-transitive graphs are Cayley graphs~\cite[p.~54]{MP1994}. Analogous conjectures may be formulated for each of the other three versions.
Before considering these limiting proportions, a basic problem is to determine the order of magnitude of the number of vertex-transitive objects in each of the four versions. The upper bound for minimally transitive groups in this paper, together with an elementary lower bound, determines this order of magnitude in the labelled versions, for both graphs and digraphs, as Corollary~\ref{Cor:VT} states.

\begin{proof}[\bf\textup{Proof of Corollary~\ref{Cor:VT}}]
Let $\mathcal{M}_n$ be the set of minimally transitive subgroups of $S_n$. Every vertex-transitive graph or digraph on $\{1,\ldots,n\}$ admits some member of $\mathcal{M}_n$ as a group of automorphisms.
For each $G\in\mathcal{M}_n$, the orbitals of $G$ are in bijection with the orbits of a point stabilizer in $G$ on $\{1,\ldots,n\}$. Hence $G$ has at most $n$ orbitals, and so at most $2^n$ digraphs on $\{1,\ldots,n\}$ admit $G$ as a group of automorphisms. It follows from Theorem~\ref{Thm:UpperBoundDetail} that the number of labelled vertex-transitive digraphs of order $n$ is at most
\[
|\mathcal{M}_n|2^n\leq2^{(3+\log(3)/3+o(1))n\log(n)}\cdot2^n=2^{O(n\log(n))}.
\]
For the lower bound, suppose that $n\geq3$. The cycle of length $n$ has automorphism group of order $2n$, and hence has
\[
\frac{n!}{2n}=2^{(1-o(1))n\log(n)}
\]
distinct labellings. This gives the required lower bound and completes the proof.
\end{proof}

\begin{remark}
The proof of Corollary~\ref{Cor:VT} actually gives the upper bound $2^{(3+\log(3)/3+o(1))n\log(n)}$ for the number of labelled vertex-transitive digraphs, and hence also for the number of labelled vertex-transitive graphs. On the other hand, the labelled cycles used in the proof provide only the lower bound $2^{(1-o(1))n\log(n)}$ for the Cayley subfamilies. Thus, although these estimates determine the order of magnitude in Corollary~\ref{Cor:VT}, they do not show that asymptotically almost all labelled vertex-transitive graphs or digraphs are Cayley.
\end{remark}

For the unlabelled digraph problem, Morris and Spiga~\cite[Subsection~8.4]{MS2021} proposed two possible group-theoretic routes: estimating, up to permutational isomorphism, either the minimally transitive groups or the transitive $2$-closed groups of degree $n$. They observed that an upper bound of $2^{o(n)}$ for either class, together with the fact that a nonregular transitive group of degree $n$ has at most $3n/4$ orbitals, would give at most $2^{3n/4+o(n)}$ non-Cayley vertex-transitive digraphs up to isomorphism. Since there are at least $2^{n+o(n)}$ Cayley digraphs up to isomorphism, this would prove the digraph analogue of the McKay--Praeger conjecture.

Clearly, the upper bound $2^{o(n)}$ proposed by Morris and Spiga could be relaxed to $2^{cn+o(n)}$ for any constant $c<1/4$, while still sufficing to prove the digraph analogue of the McKay--Praeger conjecture. However, Theorem~\ref{Thm:LowerBound} shows that even such a relaxed estimate cannot hold for minimally transitive groups. Note that this does not affect the route through transitive $2$-closed groups. Also, the minimally transitive route might instead be refined by weighting each permutational isomorphism class according to the number of non-Cayley digraphs arising as unions of its orbitals. Our result shows that the unweighted cardinality is too large for this argument, but leaves such a weighted estimate open.

\section*{Acknowledgments}

We are deeply indebted to Professor L\'{a}szl\'{o} Pyber for communicating his proof of Theorem~\ref{Thm:UpperBound} and for suggesting that we consider minimally transitive permutation groups of large order. At an early stage of this project, he also suggested that we search for examples within wreath products. This suggestion motivated the search that led to the construction underlying our lower bound, with extensive computations in \textsc{Magma}~\cite{BCP1997} playing an important role in its subsequent development. The second author was supported by the National Research, Development and Innovation Office (NKFIH) Grant No.~K153681.

\end{document}